\documentclass[10pt]{amsart}
\usepackage{amssymb}
\usepackage{amscd}
\usepackage{mathrsfs}
\usepackage{color}

\def\today{\number\day\space\ifcase\month\or   January\or February\or
   March\or April\or May\or June\or   July\or August\or September\or
   October\or November\or December\fi\   \number\year}

\theoremstyle{definition}
\newtheorem{thm}{Theorem}[section]
\newtheorem{lem}[thm]{Lemma}
\newtheorem{prp}[thm]{Proposition}
\newtheorem{dfn}[thm]{Definition}
\newtheorem{cor}[thm]{Corollary}

\newtheorem{rmk}[thm]{Remark}

\newcommand{\beq}{\begin{equation}}
\newcommand{\eeq}{\end{equation}}
\newcommand{\beqr}{\begin{eqnarray*}}
\newcommand{\eeqr}{\end{eqnarray*}}
\newcommand{\bal}{\begin{align*}}
\newcommand{\eal}{\end{align*}}
\newcommand{\bei}{\begin{itemize}}
\newcommand{\eei}{\end{itemize}}

\newcommand{\ep}{\varepsilon}

\newcommand{\ph}{\varphi}

\newcommand{\Z}{{\mathbb{Z}}}
\newcommand{\R}{{\mathbb{R}}}
\newcommand{\C}{{\mathbb{C}}}
\newcommand{\N}{{\mathbb{N}}}

\pagenumbering{arabic}

\newcommand{\dr}{{\mathrm{dr}}}

\newcommand{\ca}{C*-algebra}

% Temporary abbreviations:

\newcommand{\Aut}{{\mathrm{Aut}}}

\newcommand{\Index}{{\mathrm{Index}}}

\newcommand{\dist}{{\mathrm{dist}}}

\title[Nuclear dimension and perfectness] {Nuclear dimension for an inclusion of unital C*-algebras}
\author{Hiroyuki Osaka$^*$}
\date{8, Nov., 2011}
\thanks{$^*$Research of the first author partially supported by the JSPS grant for Scientific Research No.23540256}
\address{ Department of Mathematical Sciences\\
  Ritsumeikan University\\ Kusatsu, Shiga, 525-8577  Japan}

\email[]{osaka@se.ritsumei.ac.jp}

\author[Tamotsu Teruya]{Tamotsu Teruya\\
Dedicated to the celebration of  the retirement of Seiji Watanabe}

\address{Department of Mathematical Sciences\\
Ritsumeikan University\\ Kusatsu, Shiga, 525-8577  Japan}

\email[]{teruya@se.ritsumei.ac.jp}

\subjclass[2000]{Primary 46L55; Secandary 46L35.}

\begin{document}

\begin{abstract}

Let $n \in \N \cup \{0\}$ and $\mathcal{C}_n$ (resp. $\mathcal{C}_{{\mathrm{nuc}_n}})$ 
be a set of all separable unital C*-algebras $A$ with 
decomposition rank less than or equal to $n$ (write $\dr(A) \leq n$)
(resp. nuclear dimension rank less than or equal to $n$. 
Write $\dim_{{\mathrm{nuc}}}(A) \leq n$) in the sense of Winter. 
Set $\mathcal{C}_{\mathrm{lnuc}} = \cup_{n\in\N}\mathcal{C}_{{\mathrm{nuc}_n}}$. 
Then each of the above sets  is finitely saturated 
in the article \cite{OP:Rohlin} by Osaka and Phillips.

We prove that if a C*-algebra $A$ is a local ${\mathcal{C}_n}$,
C*-algebra
(resp. $\mathcal{C}_{{\mathrm{nuc}_n}}$, or, $\mathcal{C}_{\mathrm{lnuc}}$) 
in the sense of Osaka and Phillips, 
 then $A \in \mathcal{C}_n$ 
(resp. $A \in \mathcal{C}_{{\mathrm{nuc}_n}}$, or $A  \in  \mathcal{C}_{\mathrm{lnuc}}$).

As applications, 

\vskip 1mm

\begin{enumerate}
\item
Let $P \subset A$ be an inclusion of separable unital C*-algebras 
with finite Watatani index. Suppose that $E \colon A \rightarrow P$ has the Rokhlin property, 
that is, there is a projection 
$e \in A' \cap A^\infty$ such that 
$E^\infty(e) = ({\rm Index}E)^{-1}1$. 
\begin{enumerate}
\item
If $\dr(A) \leq n$ (resp. $\dim_{{\mathrm{nuc}}}(A) \leq n$), then 
$\dr(P) \leq n$ (resp. $\dim_{{\mathrm{nuc}}}(P) \leq n$).
\item
If $A$ is exact, pure in the sense of Winter \cite{Winter:Nuclear dimension} 
and  has stable rank one, then $P$ is pure.
\end{enumerate}
\item
If $A$ is a separable unital C*-algebra 
with $\dr(A) \leq n$ and  
$\alpha \colon G \rightarrow \Aut(A)$ an 
action of a finite group $G$ with the Rokhlin property, 
then $\dr(A^\alpha) \leq n$ and $\dr(A \rtimes_\alpha G) \leq n$.
\item
If $A$ is a separable unital C*-algebra 
with $\dim_{{\mathrm{nuc}}}(A) \leq n$ and  
$\alpha \colon G \rightarrow \Aut(A)$ an 
action of a finite group $G$ with the Rokhlin property, 
then $\dim_{{\mathrm{nuc}}}(A^\alpha) \leq n$ and 
$\dim_{{\mathrm{nuc}}}(A \rtimes_\alpha G) \leq n$.
\item
If $A$ is a separable,  unital, exact, pure C*-algebra of stable rank one 
and $\alpha \colon G \rightarrow \Aut(A)$ an 
action of a finite group $G$ with the Rokhlin property,
then $A^\alpha$ and $A \rtimes_\alpha G$ are pure.
\end{enumerate}
\end{abstract}

\maketitle

%%%%%%%%%%%%%%%%%%%%%%%%%%%%%
\section{Introduction}

The nuclear dimension of a C*-algebra was introduced by 
Winter and Zacharias in \cite{WZ} as a noncommutative version of topological 
dimension, which is weaker than the decomposition rank 
introduced by Kirchberg and Winter \cite{KW}, 
but covers large class of classification program for simple nuclear 
C*-algebras , both in the stable finite and in the purely infinite case 
\cite{Winter:Nuclear dimension}. Note that if a C*-algebra $A$ has finite decomposition rank, 
then $A$ should be stably finite. 

A C*-algebra $A$ is said to be pure if it has strict comparison of positive elements 
and almost divisible Cuntz semigroup $W(A)$. 
Here the Cuntz semigroup $W(A)$ is said to be almost divisible if 
for any positive contraction $a \in M_\infty(A)$ and $0 \not= k \in \N$ there is $x \in W(A)$ 
such that $k\cdot x \leq \langle a \rangle \leq (k + 1)\cdot x$.
Winter showed in \cite{Winter:Nuclear dimension} that a separable, simple, unital nonelementary 
C*-algebra with finite nuclear dimension is $\mathcal{Z}$-stable, that is, 
absorbs the Jiang-Su algebra $\mathcal{Z}$ tensorially. 
Note that the Jiang-Su algebra $\mathcal{Z}$ plays a crucial role in the 
revised Elliott conjecture, that is, all separable, simple, unital, nuclear $\mathcal{Z}$-
C*-algebras could be classifiable by the $K$-theory data 
$(K_0(A), (K_0(A))_+, [1_A]_0, K_1(A), T(A))$ (\cite{Ro}).

In this paper at first we consider the local $\mathcal{C}$-property for a separable 
unital C*-algebras in the sense of Osaka and Phillips \cite{OP:Rohlin}, and show that 
if $\mathcal{C}_n$ is the class of separable unital C*-algebras with dicomposition 
rank less than or equal to $n$ (resp. $\mathcal{C}_{\mathrm{nuc}_n}$ is the class of 
 separable unital C*-algebras with nuclear dimension less than or equal to $n$ ), 
 and $A$ is a local $\mathcal{C}_n$ (resp. $\mathcal{C}_{\mathrm{nuc}_n}$) , unital C*-algebra, 
 then $A$ belongs to $\mathcal{C}_n$ (resp. $\mathcal{C}_{\mathrm{nuc}_n}$). 
 As an application when $A$ is a local $\mathcal{C}_n$ 
 (resp. $\mathcal{C}_{\mathrm{nuc}_n}$), separable unital C*-algebra and $\alpha$ is an action of a 
 finite group $G$ on $A$, if $\alpha$ has the Rokhlin property 
 in the sense of Izumi \cite{Izumi:Rohlin1}, 
 then the crossed product algebra $A \rtimes_\alpha G$ belongs to $\mathcal{C}_n$ (resp. 
 $\mathcal{C}_{\mathrm{nuc}_n}$). This is an affirmative answer to Problem~9.4 in \cite{WZ}. 
 
 In section 3 we extend the above observation in  crossed product algebras to inclusions of 
 unital C*-algebra of index finite type. 
 Let $P \subset A$ be an inclusion of separable unital C*-algebras of index finite type 
 in the sense of Watatani \cite{Watatani:index} and 
 a faithful conditional expectation $E\colon A \rightarrow P$ has the Rokhlin property 
 in the sense of Kodaka, Osaka, and Teruya \cite{KOT}, then $P$ belongs to $\mathcal{C}_n$ 
 (resp. $\mathcal{C}_{\mathrm{nuc}_n}$)  when $A$ is a local $\mathcal{C}_n$ 
 (resp. $\mathcal{C}_{\mathrm{nuc}_n}$), 
 unital C*-algebra. 
 
 In section 4 we investigate the trace class of an inclusion $P \subset A$ of unital 
 C*-algebras of index finite type and show that if $E \colon A \rightarrow P$ has the Rokhlin 
 property, there is a bijection between the trace class ${\mathrm T}(A)$ of $A$ and ${\mathrm T}(P)$. 
 Therefore, if the order projections over $A$ is determined by traces, then 
 we can conclude that the order of projections over $P$ is also determined by traces.
 
 Using this observation we show in the last section that under the assumption that 
 an inclusion $P \subset A$ is of index finite type and $E\colon A \rightarrow P$  has the 
 Rokhlin property if $A$ is a unital exact C*-algebra which has the strictly comparison property, 
 then $P$ and the basic construction $C^*\langle A, e_P\rangle$ have the strictly comparison property.
 We need the exactness because that in this case the strictly comparison property is equivalent to 
 that for $x$ and $y$ in $W(A)$ 
 one has that $x \leq y$ if $d_\tau(x) \leq d_\tau(y)$ for all tracial states $\tau$ in $A$, 
 where the function $d_\tau$ is the dimension function on $A$ induced by a trace $\tau$, that is, 
 $d_\tau(a) = \lim_{n\rightarrow\infty}\tau(a^\frac{1}{n})$ for $a \in M_\infty(A)^+$.
 
When $A$ has stable rank one, the condition that $A$ has almost divisible Cuntz semigroup 
is equivalent to that there is a unital *-homomorphism from a dimension drop C*-algebra 
$Z_{n, n+1} = \{f \in C([0, 1], M_n \otimes M_{n+1})\mid f(0) \in M_n \otimes \C\ \hbox{and} f(1) \in \C \otimes M_{n+1}\}$
by \cite[Proposition~2.4]{RW}. Noe that the Jiang-Su algebra $\mathcal{Z}$ can be constructed as the 
inductive limit of the sequence of such dimension C*-algebras \cite{Jiang-Su:absorbing}. 
Then we show that when $A$ is a unital C*-algebra of stable rank one, if $A$ has almost divisible
Cuntz semigroup, then 
$P$ and $C^*\langle A, e_P\rangle$ have almost divisible Cuntz semigroup.
Therefore, if $A$ is a separable, simple, unital, exact, unital C*-algebra of stable rank one,
then $P$ and $C^*\langle A, e_P\rangle$ are pure. 
We stress that we do not need the simplicity of $A$ and $P$. 

%%%%%%%%%%%%%%%%%%%%%%%%%%%%%%%%%%%%%%%%%%%%%%%%%%%%%%%%%%%%%%%%%%%%%%%%%%%%%%
\section{Preliminaries}

%%%%%%%%%%%%%%%%%%%%%%%%%%%%%%%%%%%%%%%%%
\subsection{Local $\mathcal{C}$-property and nuclear dimension}

We recall the definition of local $\mathcal{C}$-property in 
\cite{OP:Rohlin}.

\begin{dfn}\label{D:FSat}
Let ${\mathcal{C}}$ be a class of separable unital C*-algebras.
Then ${\mathcal{C}}$ is {\emph{finitely saturated}}
if the following closure conditions hold:
\begin{enumerate}
\item\label{D:FSat:1}
If $A \in {\mathcal{C}}$ and $B \cong A,$ then $B \in {\mathcal{C}}.$
\item\label{D:FSat:2}
If $A_1, A_2, \ldots, A_n \in {\mathcal{C}}$ then
$\bigoplus_{k=1}^n A_k \in {\mathcal{C}}.$
\item\label{D:FSat:3}
If $A \in {\mathcal{C}}$ and $n \in \N,$
then $M_n (A) \in {\mathcal{C}}.$
\item\label{D:FSat:4}
If $A \in {\mathcal{C}}$ and $p \in A$ is a nonzero projection,
then $p A p \in {\mathcal{C}}.$
\end{enumerate}
Moreover,
the {\emph{finite saturation}} of a class ${\mathcal{C}}$ is the
smallest finitely saturated class which contains ${\mathcal{C}}.$
\end{dfn}

\vskip 3mm

\begin{dfn}\label{D:LC}
Let ${\mathcal{C}}$ be a class of separable unital C*-algebras.
A {\emph{unital local ${\mathcal{C}}$-algebra}}
is a separable unital C*-algebra $A$
such that for every finite set $S \subset A$ and every $\ep > 0,$
there is a C*-algebra $B$ in the finite saturation of ${\mathcal{C}}$
and a unital *-homomorphism $\ph \colon B \to A$
(not necessarily injective)
such that $\dist (a, \, \ph (B)) < \ep$ for all $a \in S.$
If one can always choose $B \in \mathcal{C}$, rather than merely 
in its finite saturation, we call 
$A$ a unital strong local $\mathcal{C}$-algebra.
\end{dfn}

\vskip 3mm

%Note that if $\mathcal{C}$ is the set of finite deimensional C*-algebras $\mathscr F$,
%and  the class of interval algebras $\mathscr I$, then  $A{\mathscr F}$ is the class 
%of C*-algebras of tracially AF C*-algebras \cite{Lin:tracial} and 
%$A{\mathscr I}$ is the class of tracial topological one (TAI- algebras) 
%\cite{Lin:tracial one} in the sense of Lin.

\vskip 3mm

If $\mathcal{C}$ is the set of 
unital C*-algebras $A$ with ${\rm dr}A < \infty$  
(resp. $\dim_{{\rm nuc}}A < \infty$ ) in the sense 
of Winter, then any local $\mathcal{C}$ algebra belongs to $\mathcal{C}$.
(See Proposition~\ref{prp:local property}.)

At first we recall the definition of the covering dimension for nuclear C*-algebras:

\begin{dfn}\label{D:finite decomposition}
(\cite{Winter:Covering dimension I}\cite{WZ})
Let $A$ be a separable C*-algebra.
\begin{enumerate}
\item
A completely positive map $\varphi\colon \oplus_{i=1}^sM_{r_i} \rightarrow A$
has order zero if 
it preserves orthogonality, i.e., $\varphi(e)\varphi(f) = \varphi(f)\varphi(e) = 0$
for $e, f \in \oplus_{i=1}^sM_{r_i}$ with $ef = fe = 0$.
\item
A completely positive map $\varphi\colon \oplus_{i=1}^sM_{r_i} \rightarrow A$
is $n$-decomposable, there is a decomposition $\{1, \dots, s\} = \coprod_{j=0}^nI_j$ 
such that the restriction $\varphi^{(j)}$ of $\varphi$ to $\oplus_{i \in I_j}M_{r_i}$ has 
ordere zero for each $j \in \{0, \dots, n\}$.
\item
$A$ has decomposition rank $n$, ${\rm dr} A = n$,
if $n$ is the least integer such that the following holds :
Given $\{a_1, \dots, a_m\} \subset A$ and $\varepsilon > 0$, 
there is a completely positive approximation property $(F, \psi, \varphi)$ for 
$a_1, \dots, a_m$ within $\varepsilon$, i.e., $F$ is a finite dimensional C*-algebra, 
and $\psi\colon A \rightarrow F$ and 
$\varphi\colon F \rightarrow A$ are completely positive contruction such that 
\begin{enumerate}
\item
$\|\varphi\psi(a_i) - a_i\| < \varepsilon$, 
\item
$\varphi$ is $n$-decomposable. 
\end{enumerate}

If no such $n$ exists, we write ${\rm dr}A = \infty$.
\item
$A$ has nuclear dimension  $n$, $\dim_{\rm nuc} A = n$,
if $n$ is the least integer such that the following holds :
Given $\{a_1, \dots, a_m\} \subset A$ and $\varepsilon > 0$, 
there is a completely positive approximation property $(F, \psi, \varphi)$ for 
$a_1, \dots, a_m$ within $\varepsilon$, i.e., $F$ is a finite dimensional C*-algebra, 
and $\psi\colon A \rightarrow F$ and 
$\varphi\colon F \rightarrow A$ are completely positive  such that 
\begin{enumerate}
\item
$\|\varphi\psi(a_i) - a_i\| < \varepsilon$
\item
$\|\psi\| \leq 1$
\item
$\varphi$ is $n$-decomposable, and  each restriction $\varphi|_{\oplus_{i\in I_j}M_{r_i}}$ is completely positive contructive. 
\end{enumerate}

If no such $n$ exists, we write $\dim_{{\rm nuc}}A = \infty$.
\end{enumerate}
\end{dfn}

\vskip 3mm

The followings are basic facts about finite decomposition and nuclear dimension 
in \cite{KW}\cite{Winter:Covering dimension I}\cite{WZ}.

\begin{enumerate}
\item 
If $\dim_{\rm nuc}(A) \leq n < \infty$, then $A$ is nuclear.
\item 
For any C*-algebras $\dim_{\rm nuc} A \leq {\rm dr} A$.
\item 
$\dim_{\rm nuc} A = 0$ if and only if ${\rm dr} A = 0$ if and only if $A$ is an AF algebra.
\item
Nuclear dimension and decomposition rank in general do not coincide. 
Indeed, the Toeplitz algebra $\mathcal{T}$ has nuclear dimension at most $2$, but 
its decomposition rank is infinity. Note that if ${\rm dr} A \leq n < \infty$, 
$A$ is quasidiagonal \cite[Proposition~5.1]{KW}, 
that is , stably finite. The Toeplitz algebra $\mathcal{T}$
has an isometry, and we know that $\mathcal{T}$ is infinite.
\item
Let $X$ be a locally compact Hausdorff space. Then 
$$
\dim_{\rm nuc} C_0(X) = {\rm dr} C_0(X).
$$
In particular, if  $X$ is second countable, 
$$
\dim_{\rm nuc} C_0(X) = {\rm dr} C_0(X) = \dim X
$$
(\cite[Proposition~2.4]{WZ}).
\end{enumerate}

\vskip 3mm

\begin{prp}\label{prp:local property}
For each $n \in \N \cup \{0\}$ 
let $\mathcal{C}_n$ is the set of unital C*-algebras $A$ with ${\rm dr}A \leq n$
and $\mathcal{C}_{{\rm nuc}_n}$  is the set of unital C*-algebras $A$ with $\dim_{\rm nuc} A \leq n$.
Then 
both of $\mathcal{C}_n$ and $\mathcal{C}_{{\rm nuc}_n}$ 
are finitely saturated.  
\end{prp}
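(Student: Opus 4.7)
The plan is to verify the four closure axioms of Definition~\ref{D:FSat} for each of $\mathcal{C}_n$ and $\mathcal{C}_{{\rm nuc}_n}$ by direct manipulation of the CP approximations from Definition~\ref{D:finite decomposition}. Axiom (1), isomorphism invariance, is immediate because $\dr$ and $\dim_{{\rm nuc}}$ are intrinsic invariants, defined only through approximations internal to the given C*-algebra.

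For axiom (2), given $A_1,\dots,A_m$ in the class together with a finite set $S\subset\bigoplus_iA_i$ and $\varepsilon>0$, I will decompose $S$ into coordinate pieces $S_i\subset A_i$ and choose a CP approximation $(F_i,\psi_i,\varphi_i)$ for $S_i$ within $\varepsilon$, with decomposition $\{1,\dots,s_i\}=\coprod_{j=0}^nI_j^{(i)}$ witnessing the $n$-decomposability of $\varphi_i$. Setting $F=\bigoplus_iF_i$, $\psi=\bigoplus_i\psi_i$, $\varphi=\bigoplus_i\varphi_i$, and $I_j=\coprod_iI_j^{(i)}$, the restriction of $\varphi$ to the $j$-th block is a direct sum of order-zero maps into pairwise orthogonal summands of $\bigoplus_iA_i$, and therefore is again order zero; contractivity and complete positivity pass through coordinatewise.

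For axiom (3), given a CP approximation $(F,\psi,\varphi)$ for $A$, I will tensor with $\id_{M_k}$ to obtain $(F\otimes M_k,\,\psi\otimes\id_{M_k},\,\varphi\otimes\id_{M_k})$, which approximates any prescribed finite set in $M_k(A)=A\otimes M_k$ within the same error. The key point is that amplification by $\id_{M_k}$ preserves the order-zero property for CP maps out of finite-dimensional C*-algebras: invoking the Winter--Zacharias structure theorem to write each piece as $\varphi|_{\bigoplus_{i\in I_j}M_{r_i}}(a)=h_j\pi_j(a)$ with $\pi_j$ a $*$-homomorphism and $h_j\ge 0$ commuting with the image of $\pi_j$, the amplification has the same normal form $(h_j\otimes 1_{M_k})(\pi_j\otimes\id_{M_k})(\,\cdot\,)$, which witnesses order zero.

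Axiom (4), closure under corners, is the subtle step and the one I expect to be the main obstacle. Since $pAp$ is a hereditary subalgebra of $A$, the inequalities $\dr(pAp)\le\dr(A)$ and $\dim_{{\rm nuc}}(pAp)\le\dim_{{\rm nuc}}(A)$ follow from the standard hereditary-subalgebra results of Kirchberg--Winter \cite{KW} and Winter--Zacharias \cite{WZ}. For a self-contained argument one would start from a CP approximation $(F,\psi,\varphi)$ for $S\cup\{p\}\subset A$ and attempt the naive compression $y\mapsto p\varphi(y)p$; this correctly approximates elements of $pAp$ because $pap=a$, but typically destroys the order-zero property on the individual blocks, so one must instead first perturb $\psi(p)$ via functional calculus to a genuine projection $q\in F$, cut $F$ down to $qFq$, and then compress by the image of $q$ in $A$ to restore the order-zero normal form on each piece. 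Citing the hereditary-subalgebra theorems lets us bypass this technicality, which is what I would do here.
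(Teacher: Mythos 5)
Your proposal is correct and follows essentially the same route as the paper: the paper simply verifies the four closure conditions of Definition~\ref{D:FSat} by citing the permanence properties of $\dr$ and $\dim_{\mathrm{nuc}}$ under direct sums, matrix amplification and hereditary subalgebras from \cite{KW} and \cite{WZ}, which are exactly the facts you either re-derive directly (axioms (1)--(3)) or cite (axiom (4)). The only cosmetic point is that in axiom (3) the entrywise estimate picks up a factor of $k^2$, so one should start from a finer tolerance; this does not affect the argument.
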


\vskip 3mm

\begin{proof}
By \cite[Remark~3.2 (iii):(3.1)-(3.3), Proposition 3.8, and Corollary~3.9]{KW}.
we know that $\mathcal{C}_n$ is finitely saturated. 

Similarily, it follows from \cite[Propostion~2.3 and Corollary~2.8]{WZ} that 
$\mathcal{C}_{{\rm nuc}_n}$ is finitely saturated.
\end{proof}

%%%%%%%%%%%%%%%%%%%%%%%%%%%%%%%%%%%%%%%%%%%%%%%%%%
\subsection{C*-index Theory}

We introduce an index in terms of a quasi-basis following Watatani 
(\cite{Watatani:index}).

\begin{dfn}
Let $A \supset P$ be an  inclusion of unital \ca s\ with a conditional expectation $E$ from $A$ onto $P$.
\begin{enumerate}
 \item A {\it quasi-basis} for $E$ is a finite set $\{(u_i, v_i)\}_{i=1}^n \subset A \times A$ such that 
 for every $a \in A$, 
 $$
 a = \sum_{i=1}^nu_iE\left(v_i a\right)= \sum_{i=1}^n E\left(a u_i\right)v_i.
 $$
 \item When $\{(u_i, v_i)\}_{i=1}^n$ is a quasi-basis for $E$, we define $\Index E$ by 
 $$
 \Index E = \sum_{i=1}^n u_iv_i.
 $$
 When there is no quasi-basis, we write $\Index E = \infty$. $\Index E$ is called the 
 Watatani index of $E$.  
\end{enumerate}
\end{dfn}

\begin{rmk}\label{rmk:quasi} We give several remarks about the above definitions.
\begin{enumerate}
 \item $\Index E$ does not depend on the choice of the quasi-basis in the above formula, 
 and it is a central element of $A$ 
 (\cite[Proposition 1.2.8]{Watatani:index}).
 \item Once we know that there exists a quasi-basis, we can choose one of the form 
 $\{(w_i, w_i^*)\}_{i=1}^m$, which shows that $\Index E$ is a positive element 
 (\cite[Lemma 2.1.6]{Watatani:index}).
 \item By the above statements, if $A$ is a simple $C^*$-algebra, then $\Index E$ is a 
 positive scalar.
 %\item Let $\{(u_i, v_i)\}_{i=1}^n$ be a quasi-basis for $E$. 
 %If $A$ acts on a Hilbert space $\mathcal{H}$ faithfully, then we can define the map $E^{-1}$
 %from $P' \cap B(\mathcal {H})$ to $A' \cap B(\mathcal{H})$ by $E^{-1}(x) = \sum_{i=1}^n u_i x v_i$ for $x$ in $P' \cap B(\mathcal{H})$. 
 %In fact, 
 %for any $x \in P'\cap B(\mathcal{H})$ and $a \in A$ 
 %\begin{eqnarray*}
 %E^{-1}(x)a&=& \sum_{i =1}^n u_i x v_i a \\
 %&=&\sum_{i, j =1}^n u_i x E(v_i a u_j)v_j \\
 %&=&\sum_{i, j =1}^n u_i E(v_i a u_j)x v_j \\
 %&=&\sum_{j =1}^n  a u_jx v_j = aE^{-1}(x). \\
%\end{eqnarray*}
\item If $\Index E < \infty$, then $E$ is faithful, that is, $E(x^*x) = 0$ implies $x=0$ for $x \in A$.
\end{enumerate}
\end{rmk}

\subsubsection{$C^*$-basic construction}

In this subsection, we recall Watatani's notion of the 
$C^*$-basic construction.

Let $E\colon A\to P$ be a faithful conditional expectation.
Then $A_{P}(=A)$ is
 a pre-Hilbert module over $P$ with a $P$-valued inner
product $$\langle x,y\rangle_P =E(x^{*}y), \ \ x, y \in A_{P}.$$
We denote by ${\mathcal E}_E$ and $\eta_E$ the Hilbert $P$-module completion of $A$ 
by the norm $\Vert x \Vert_P = \Vert \langle x, x \rangle_P\Vert^{\frac{1}{2}}$ for $x$ in $A$
and the natural inclusion map 
from $A$ into ${\mathcal E}_E$.  
Then ${\mathcal E}_E$ is a Hilbert $C^{*}$-module over $P$.
Since $E$ is faithful, the inclusion map $\eta_E$ from  $A$ to ${\mathcal E}_E$ is injective.
Let $L_{P}({\mathcal E}_E)$ be the set of all (right) $P$-module homomorphisms
$T\colon {\mathcal E}_E \to {\mathcal E}_E$ with an adjoint right $P$-module homomorphism
$T^{*}\colon {\mathcal E}_E \to {\mathcal E}_E$ such that $$\langle T\xi,\zeta
\rangle =
\langle \xi,T^{*}\zeta \rangle \ \ \ \xi, \zeta \in {\mathcal E}_E.$$
Then $L_{P}({\mathcal E}_E)$ is a $C^{*}$-algebra with the operator norm
$\|T\|=\sup\{\|T\xi \|:\|\xi \|=1\}.$ There is an injective
$*$-homomorphism $\lambda \colon A\to L_{P}({\mathcal E}_E)$ defined by
$$
\lambda(a)\eta_E(x)=\eta_E(ax)
$$
for $x\in A_{P}$ and  $a\in A$, so that $A$ can
be viewed as a
$C^{*}$-subalgebra of $L_{P}({\mathcal E}_E)$.
Note that the map $e_{P}\colon A_{P}\to A_{P}$
defined by 
$$
e_{P}\eta_E(x)=\eta_E(E(x)),\ \ x\in
A_{P}
$$
 is
bounded and thus it can be extended to a bounded linear operator, denoted
by $e_{P}$ again, on ${\mathcal E}_E$.
Then $e_{P}\in L_{P}({{\mathcal E}_E})$ and $e_{P}=e_{P}^{2}=e_{P}^{*}$; that
is, $e_{P}$ is a projection in $L_{P}({\mathcal E}_E)$.
A projection $e_P$ is called the {\em Jones projection} of $E$.

The {\sl (reduced) $C^{*}$-basic construction} is a $C^{*}$-subalgebra of
$L_{P}({\mathcal E}_E)$ defined to be
$$
C^{*}_r\langle A, e_{P}\rangle = \overline{ span \{\lambda (x)e_{P} \lambda (y) \in
L_{P}({{\mathcal E}_E}): x, \ y \in A \ \} }^{\|\cdot \|} 
$$

\begin{rmk}\label{rmk:b-const}
Watatani proved the following in \cite{Watatani:index}:
\begin{enumerate}
 \item $\Index E$ is finite if and only if $C^{*}_r\langle A, e_{P}\rangle$ has the identity 
 (equivalently $C^{*}_r\langle A, e_{P}\rangle = L_{P}({\mathcal E}_E)$) and there exists a constant 
 $c>0$ such that $E(x^*x) \geq cx^*x$ for $x \in A$, i.e., $\Vert x \Vert_P^2 \geq c\Vert x \Vert^2 $ 
 for $x$ in $A$ by \cite[Proposition 2.1.5]{Watatani:index}.
 Since $\Vert x \Vert \geq \Vert x \Vert_P$ for  $x$ in $A$, if $\Index E$ is finite, then ${\mathcal E}_E = A$.
 \item If $\Index E$ is finite, then each element $z$ in $C^{*}_r\langle A, e_{P}\rangle$ has a form 
 $$
 z = \sum_{i=1}^n \lambda(x_i) e_P \lambda(y_i)
 $$
 for some $x_i$ and $y_i $ in $A$.
 \item Let $C^{*}_{\rm max}\langle A, e_{P}\rangle$ be the unreduce $C^*$-basic construction defined in 
 Definition 2.2.5 of \cite{Watatani:index}, which has the certain universality (cf.(5)).
 If $\Index E$ is finite, then there is an isomorphism from 
 $C^{*}_r\langle A, e_{P}\rangle$ onto $C^{*}_{\rm max}\langle A, e_{P}\rangle$ (\cite[Proposition 2.2.9]{Watatani:index}).
 Therefore we  can identify $C^{*}_r\langle A, e_{P}\rangle$ with $C^{*}_{\rm max}\langle A, e_{P}\rangle$. 
 So we call it the $C^*$-{\it basic construction} and denote it by $C^{*}\langle A, e_{P}\rangle$. 
 Moreover 
 we identify $\lambda(A)$ with $A$ in $C^*\langle A, e_p\rangle (= C^{*}_r\langle A, e_{P}\rangle)$ 
and we denote 
$$
C^*\langle A, e_p\rangle = \{ \sum_{i=1}^n x_i e_P y_i : x_i, y_i \in A, n \in \N\}.
$$
\item The C*-basic construction 
$C^{*}\langle A, e_p\rangle$ is isomorphic to  $qM_n(P)q$ 
for some $n \in \N$ and a projection $q \in M_n(P)$ 
(\cite[Lemma~3.3.4]{Watatani:index}).
%If $\Index E$ is finite, then $\Index E$ is a central  invertible  element of $A$ and 
%there is the dual conditional expectation $\hat{E}$ from $C^{*}\langle A, e_{P}\rangle$ onto $A$ such that 
%$$
%   \hat{E}(x e_P y) = (\Index E)^{-1}xy \quad \text{for} \ x, y \in A
% $$    
% by \cite[Proposition 2.3.2]{Watatani:index}. Moreover, $\hat{E}$ has a finite index and faithfulness.
%\item Suppose that $\Index E$ is finite and $A$ acts on a Hilbert space $\mathcal{H}$ faithfully and $e$ is a projection on 
% $\mathcal{H}$ such that $eae =E(a)e$ for $a \in A$. If a map $P \ni x \mapsto xe \in B(\mathcal{H})$ 
% is injective, then there exists an isomorphism $\pi$ from the norm closure  of a linear span of $AeA$ to  
% $C^{*}\langle A, e_{P}\rangle$ such that $\pi(e) = e_P$ and $\pi(a) = a$ for $a \in A$ \cite[Proposition 2.2.11]{Watatani:index}.
\end{enumerate}
\end{rmk}

\vskip 3mm
%%%%%%%%%%%%%%%%%%%%%%%%%%%%%%%%%%%%%%%%%%%%%%%%%
\subsection{Rokhlin property for an inclusion of unital C*-algebras}

For a $C^*$-algebra $A$, we set 
\begin{eqnarray*}
c_0(A) &=& \{(a_n) \in l^\infty(\N, A): \lim\limits_{n \to \infty} \Vert a_n \Vert= 0\}  \\
 A^\infty &=&l^\infty(\N, A)/c_0(A).  
\end{eqnarray*}
We identify $A$ with the $C^*$-subalgebra of $A^\infty$ consisting of the equivalence classes of constant sequences and 
set
$$
A_\infty = A^\infty \cap A'.
$$
For an automorphism $\alpha \in {\rm Aut}(A)$, we denote by $\alpha^\infty$ and $\alpha_\infty$ the automorphisms of 
$A^\infty$ and $A_\infty$ induced by $\alpha$, respectively.

Izumi defined the Rohklin property for a finite group action in \cite[Definition 3.1]{Izumi:Rohlin1} as follows:

\begin{dfn}\label{def:group action}
Let $\alpha$ be an action of a finite group $G$ on a unital $C^*$-algebra $A$. 
$\alpha$ is said to have the {\it Rohklin property} if there exists a partition of unity 
$\{e_g\}_{g \in G} \subset A_\infty$ consisting of projections satisfying 
$$
(\alpha_g)_\infty(e_h) = e_{gh} \quad \text{for} \  g, h \in G.
$$
We call $\{e_g\}_{g\in G}$  Rohklin projections. 
\end{dfn}  

\vskip 3mm

Let $A \supset P$ be an inclusion of unital $C^*$-algebras.
For a conditional expectation $E$ from  $A$ onto $P$, we denote by $E^\infty$, the natural
conditional expectation from $A^\infty $ onto $ P^\infty$ induced by $E$.
If $E$ has a finite index with a quasi-basis $\{(u_i, v_i)\}_{i=1}^n$, then 
$E^\infty$ also has a finite index with a quasi-basis $\{(u_i, v_i)\}_{i=1}^n$
and $\Index (E^\infty) = \Index E$.

\vskip 3mm

Motivated by Definition~\ref{def:group action}
Kodaka, Osaka, and Teruya introduced the Rokhlin property for a inclusion of 
unital C*-algebras with a finite index \cite{KOT}. 

\vskip 3mm

\begin{dfn}\label{Rokhlin}
A conditional expectation $E$ of a unital  $C^*$-algebra $A$ with a finite index is said to have the {\it Rokhlin property} 
if there exists a  projection $e \in A_\infty$ satisfying 
$$
E^\infty(e) = ({\Index}E)^{-1} \cdot 1
$$
and a map $A \ni x \mapsto xe$ is injective. We call $e$ a Rokhlin projection.
\end{dfn}

\vskip 3mm

The following result says that 
the Rokhlin property of an action in the sense of Izumi implies that the canonical 
conditional expectation from a given simple C*-algebra 
to its fixed point algebra has the Rokhlin property in the sense of Definition~\ref{Rokhlin}.

\vskip 1mm

\begin{prp}\label{prp:group}(\cite{KOT})
Let $\alpha$ be an action of a finite group $G$ on a unital $C^*$-algebra $A$ and 
$E$  the canonical conditional expectation from $A$ onto the fixed point algebra $P = A^{\alpha}$ defined by 
$$
E(x) = \frac{1}{\#G}\sum_{g\in G}\alpha_g(x) \quad \text{for} \ x \in A, 
$$
where $\#G$ is the order of $G$.
Then $\alpha$ has the Rokhlin property if and only if there is a projection $e \in A_\infty$ such that 
$E^\infty (e) = \frac{1}{\#G}\cdot 1$, where $E^\infty$ is the conditional expectation from $A^\infty$ onto 
$P^\infty$ induced by $E$.
\end{prp}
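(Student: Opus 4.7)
The plan is to prove the equivalence by producing the Rokhlin family for the action from a single projection $e$ with $E^\infty(e)=\frac{1}{\#G}\cdot 1$, and conversely extracting such an $e$ from an Izumi Rokhlin family. The main computational tool on both sides is the explicit formula $E^\infty=\frac{1}{\#G}\sum_{g\in G}(\alpha_g)^\infty$, which holds because $E$ is defined this way on $A$ and $E^\infty$ is just the componentwise extension.

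For the forward direction, assume $\alpha$ has the Izumi Rokhlin property with partition of unity $\{e_g\}_{g\in G}\subset A_\infty$ satisfying $(\alpha_g)_\infty(e_h)=e_{gh}$. I would set $e=e_1$ (the projection indexed by the identity element). Then
\[
E^\infty(e_1)=\frac{1}{\#G}\sum_{g\in G}(\alpha_g)^\infty(e_1)=\frac{1}{\#G}\sum_{g\in G}e_g=\frac{1}{\#G}\cdot 1,
\]
where the last equality uses that the $e_g$ form a partition of unity.

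For the reverse direction, assume $e\in A_\infty$ is a projection with $E^\infty(e)=\frac{1}{\#G}\cdot 1$ and define $e_g:=(\alpha_g)^\infty(e)$ for $g\in G$. Each $e_g$ is again a projection since $(\alpha_g)^\infty$ is a $*$-automorphism, and each lies in $A_\infty=A^\infty\cap A'$: if $a\in A$, then $e_g a=(\alpha_g)^\infty\bigl(e\cdot\alpha_{g^{-1}}(a)\bigr)=(\alpha_g)^\infty\bigl(\alpha_{g^{-1}}(a)\cdot e\bigr)=a e_g$, using that $\alpha_{g^{-1}}(a)\in A$ commutes with $e$. The covariance property $(\alpha_g)_\infty(e_h)=(\alpha_g)^\infty((\alpha_h)^\infty(e))=(\alpha_{gh})^\infty(e)=e_{gh}$ is immediate. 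Finally,
\[
\sum_{g\in G}e_g=\sum_{g\in G}(\alpha_g)^\infty(e)=\#G\cdot E^\infty(e)=1.
\]

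The main obstacle, such as it is, is checking that the $e_g$ are \emph{mutually orthogonal}, since the hypothesis only gives each one as a projection and the sum equal to $1$. For this I would use the standard trick: from $e_g=e_g\cdot 1=\sum_h e_ge_h$ we get $\sum_{h\ne g}e_ge_h=0$; adding its adjoint yields $\sum_{h\ne g}(e_ge_h+e_he_g)=0$; compressing by $e_g$ on both sides gives $\sum_{h\ne g}2e_ge_he_g=0$. Each summand $e_ge_he_g=(e_he_g)^*(e_he_g)$ is positive, so each vanishes, forcing $e_he_g=0$ for $h\ne g$. This upgrades the $e_g$ to a partition of unity by orthogonal projections, completing the verification of Izumi's definition.
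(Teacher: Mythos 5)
Your proof is correct, and it is the natural argument: the paper itself gives no proof of this proposition, delegating it entirely to the citation of Kodaka--Osaka--Teruya, and your argument is the standard one that reference uses (take $e=e_1$ in one direction; in the other, translate $e$ by $(\alpha_g)^\infty$ and verify the partition-of-unity and covariance conditions). All the individual steps check out, including the identity $E^\infty=\frac{1}{\#G}\sum_{g\in G}(\alpha_g)^\infty$, the commutation argument showing $e_g\in A_\infty$, and the standard positivity trick upgrading ``projections summing to $1$'' to mutual orthogonality. One small remark: the statement only asserts the existence of a projection with $E^\infty(e)=\frac{1}{\#G}\cdot 1$, not the full Rokhlin property of Definition~\ref{Rokhlin} (which additionally requires injectivity of $x\mapsto xe$), so you are right not to address that extra condition here; the paper handles it separately via simplicity of $A$ where needed.
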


\vskip3mm

The following is a key lemma in this note.

\begin{prp}\label{prp:embedding}(\cite{KOT}\cite[Lemma~2.5]{OT})
Let $P \subset A$ be an inclusion of unital \ca s and 
$E$  a conditional expectation from $A$ onto $P$ with a finite index.
If $E$ has the Rokhlin property with a Rokhlin projection $e \in A_\infty$, then 
there is a unital linear map 
$\beta \colon A^\infty 
\rightarrow P^\infty$ such that 
for any $x \in A^\infty$ there exists 
the unique element $y$ of $P^\infty$ such that $xe = ye = \beta(x)e$ and 
$\beta(A' \cap A^\infty) \subset P' \cap P^\infty$. 
In particular, $\beta_{|_A}$ is a unital injective *-homomorphism and 
$\beta(x) = x $ for all  $x \in P$.
\end{prp}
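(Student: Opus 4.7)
The plan is to define $\beta \colon A^\infty \to P^\infty$ by the explicit formula
\[
\beta(x) := (\Index E)\cdot E^\infty(xe),
\]
and then to verify both the defining identity $\beta(x)e = xe$ and the listed structural properties. Uniqueness of $y$ comes first and is cheap: if $y \in P^\infty$ satisfies $ye=0$, then $yey^*=0$, and the $P^\infty$-bimodule property of $E^\infty$ together with $E^\infty(e)=(\Index E)^{-1}$ gives $0=E^\infty(yey^*)=yE^\infty(e)y^*=(\Index E)^{-1}yy^*$, so $y=0$. This simultaneously proves the uniqueness clause and the injectivity of $P^\infty \ni y \mapsto ye$.

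The crux is the identity $\beta(x)e=xe$ for every $x\in A^\infty$. My plan is to show $E^\infty\bigl((\beta(x)e-xe)(\beta(x)e-xe)^*\bigr)=0$: expanding the product, pulling $\beta(x),\beta(x)^*\in P^\infty$ through $E^\infty$, and substituting $\beta(x)=(\Index E)E^\infty(xe)$ reduces the desired vanishing to the Pimsner--Popa-type identity
\[
E^\infty(xex^*) = (\Index E)\,E^\infty(xe)\,E^\infty(xe)^*,\qquad x\in A^\infty.
\]
Faithfulness of $E^\infty$---inherited from the finite-index inequality $E(a^*a)\ge c\,a^*a$ of Remark~\ref{rmk:b-const}(1), passed to the sequence algebra---then forces $\beta(x)e-xe=0$. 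This Pimsner--Popa identity is the main obstacle: it expresses that the Rokhlin projection $e$ behaves like an internal Jones projection for $E^\infty$. For Izumi-type Rokhlin it follows directly from orthogonality of the Rokhlin family $\{e_g\}$ (one checks this identity collapses to a single diagonal sum), and in the present generality it is extracted from the asymptotic matrix-unit structure of the elements $u_i e v_j$ afforded by the quasi-basis $\{(u_i,v_i)\}$ of $E$.

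Once $\beta(x)e=xe$ is in hand, everything else is formal. Linearity and unitality are manifest from the formula, and for $x\in P$ the $P^\infty$-bimodule property gives $\beta(x)=(\Index E)xE^\infty(e)=x$. For multiplicativity on $A$, using that $e\in A_\infty$ commutes with $A$, for $a,b\in A$ I compute
\[
\beta(a)\beta(b)\,e=\beta(a)\,be=\beta(a)\,eb=aeb=abe=\beta(ab)\,e,
\]
and uniqueness yields $\beta(ab)=\beta(a)\beta(b)$; the $*$-preservation is identical. Injectivity of $\beta|_A$ is immediate from the Rokhlin clause that $x\mapsto xe$ is injective on $A$: $\beta(x)=0$ implies $xe=\beta(x)e=0$, so $x=0$. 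Finally, for $x\in A'\cap A^\infty$ and $p\in P\subset A$, the commutation $[e,p]=0$ gives
\[
\beta(x)\,pe=\beta(x)\,ep=xep=xpe=pxe=p\,\beta(x)\,e,
\]
so $(\beta(x)p-p\beta(x))e=0$, and uniqueness forces $\beta(x)p=p\beta(x)$, giving $\beta(x)\in P'\cap P^\infty$.
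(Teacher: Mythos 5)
Your architecture is the right one, and it matches how this lemma is actually proved in the cited sources (the paper itself only quotes the statement from \cite{KOT} and \cite[Lemma~2.5]{OT}): the formula $\beta(x)=(\Index E)E^\infty(xe)$ is indeed the only possible candidate, your uniqueness argument for $y\mapsto ye$ on $P^\infty$ is correct, the reduction of $\beta(x)e=xe$ to the identity $E^\infty(xex^*)=(\Index E)\,E^\infty(xe)E^\infty(xe)^*$ via faithfulness of $E^\infty$ is correct, and the downstream verifications (multiplicativity and $*$-preservation via uniqueness, $\beta|_P=\id$, injectivity on $A$, $\beta(A'\cap A^\infty)\subset P'\cap P^\infty$) all go through once that identity is available.

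The genuine gap is that this identity --- which you rightly call the main obstacle --- is never proved, and the sentence offered in its place points in a direction that cannot work: since $e\in A'\cap A^\infty$, one has $u_iev_j=u_iv_je$, so these elements carry no matrix-unit structure at all (that structure belongs to the Jones projection $e_P$, which does \emph{not} commute with $A$). The actual mechanism is a reverse-Schwarz argument. First reduce to $x=a\in A$: for general $x\in A^\infty$ one writes $xe=\sum_iE^\infty(xw_i)w_i^*e=\sum_iE^\infty(xw_i)\beta(w_i^*)e$, which lies in $P^\infty e$ once the case of constants is known. For $a\in A$ the identity says exactly that the u.c.p.\ map $\phi(a):=(\Index E)E^\infty(ae)=(\Index E)E^\infty(e^{1/2}ae^{1/2})$ satisfies $\phi(aa^*)=\phi(a)\phi(a)^*$. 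Kadison--Schwarz gives $\phi(aa^*)\geq\phi(a)\phi(a)^*$ for free; for the converse, take a quasi-basis of the form $\{(w_i,w_i^*)\}$ and compute $\sum_i\phi(w_iw_i^*)=\phi(\Index E)=(\Index E)^2E^\infty(e)=\Index E$, while the quasi-basis identity $\sum_iE^\infty(zw_i)E^\infty(w_i^*z')=E^\infty(zz')$ applied with $z=z'=e$ gives $\sum_i\phi(w_i)\phi(w_i)^*=(\Index E)^2E^\infty(e^2)=\Index E$ as well. Hence the nonnegative Schwarz defects $\phi(w_iw_i^*)-\phi(w_i)\phi(w_i)^*$ sum to zero and each vanishes; by Choi's multiplicative-domain lemma every $w_i$ is multiplicative for $\phi$, and since each $a\in A$ equals $\sum_iw_iE(w_i^*a)$ with $E(w_i^*a)\in P$ and $\phi$ is a $P$-bimodule map, $\phi$ is a $*$-homomorphism. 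This is precisely where the normalization $E^\infty(e)=(\Index E)^{-1}$ is consumed, and it is the step your write-up asserts rather than supplies.
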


\vskip 3mm

%%%%%%%%%%%%%%%%%%%%%%%%%%%%%%%%%%%%%%%%%%%%%%%%%%%%%%%%%%%%
\section{Permanent properties for decomposable rank and nuclear dimension}

\begin{thm}\label{Th:finite decomposition}
Let $n \in \N \cup \{0\}$ and 
$\mathcal{C}_n$ be the set of separable unital C*-algebras $D$ with ${\mathrm{dr}}D \leq n$ and 
$\mathcal{C}_{{\mathrm{nuc}_n}}$ be the set of separable unital C*-algebras $D$ with 
$\dim_{\mathrm{nuc}}D \leq n$.
\begin{enumerate}
\item
If $A$ be a separable, unital, local $\mathcal{C}_n$, C*-algebra. 
Then $A$ belongs to $\mathcal{C}_n$, that is, ${\mathrm{dr}}A \leq n$.
\item
If $A$ be a separable, unital, local $\mathcal{C}_{{\mathrm{nuc}_n}}$, C*-algebra. 
Then $A$ belongs to $\mathcal{C}_{{\mathrm{nuc}_n}}$, that is, $\dim_{\mathrm{nuc}}A \leq n$.
\end{enumerate}
\end{thm}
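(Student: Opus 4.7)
The plan is to unpack the local $\mathcal{C}$-algebra definition, transfer the problem to a $*$-homomorphic image $B_0\subseteq A$ of a member of $\mathcal{C}_n$, build a completely positive approximation of the chosen finite set inside that image, and then extend and include it to produce one defined on all of $A$.

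Fix $\{a_1,\dots,a_m\}\subset A$ and $\varepsilon>0$. By \PP{prp:local property} the class $\mathcal{C}_n$ (resp.\ $\mathcal{C}_{\mathrm{nuc}_n}$) equals its own finite saturation, so the local hypothesis supplies $B\in\mathcal{C}_n$, a unital $*$-homomorphism $\ph\colon B\to A$, and elements $b_i\in B$ with $\|\ph(b_i)-a_i\|<\eta$, for a tolerance $\eta>0$ to be fixed later. Let $B_0=\ph(B)\subset A$ and write $\io\colon B_0\hookrightarrow A$ for the inclusion; then $B_0\cong B/\ker\ph$, and since both decomposition rank and nuclear dimension descend to quotients (standard permanence from \cite{KW} and \cite{WZ}), we still have $\dr(B_0)\leq n$ (resp.\ $\dim_{\mathrm{nuc}}(B_0)\leq n$). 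Writing $b_i'=\ph(b_i)\in B_0$, we have $\|b_i'-a_i\|<\eta$.

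Apply the definition of $\dr$ (resp.\ $\dim_{\mathrm{nuc}}$) to $\{b_1',\dots,b_m'\}\subset B_0$ with tolerance $\ep/3$, obtaining a finite-dimensional C*-algebra $F$ and completely positive maps $\ps_0\colon B_0\to F$, $\ph_0\colon F\to B_0$ with $\|\ps_0\|\leq 1$, with $\ph_0$ being $n$-decomposable (cpc overall in (1), cpc on each summand $F^{(j)}$ in (2)), and $\|\ph_0\ps_0(b_i')-b_i'\|<\ep/3$. Using that $F$ is finite-dimensional, hence an injective operator system, Arveson's extension theorem produces a cpc map $\tilde\ps\colon A\to F$ extending $\ps_0$. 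Set $\tilde\ph=\io\circ\ph_0\colon F\to A$. Since $\io$ is an injective $*$-homomorphism it preserves orthogonality and is isometric, so for each $j$ the restriction $\tilde\ph|_{F^{(j)}}=\io\circ\ph_0|_{F^{(j)}}$ inherits the order-zero and cpc properties from $\ph_0|_{F^{(j)}}$; hence $\tilde\ph$ is $n$-decomposable with the norm conditions required by each of (1) and (2). Using $\tilde\ps(b_i')=\ps_0(b_i')$, $\tilde\ph\ps_0(b_i')=\ph_0\ps_0(b_i')$, and $\|\tilde\ph\|\leq n+1$, the triangle inequality gives
\bal
\|\tilde\ph\tilde\ps(a_i)-a_i\|
&\leq \|\tilde\ph\|\cdot\|\tilde\ps(a_i-b_i')\|+\|\ph_0\ps_0(b_i')-b_i'\|+\|b_i'-a_i\|\\
&\leq (n+1)\eta+\ep/3+\eta,
\eal
so choosing $\eta<\ep/(3(n+2))$ yields $\|\tilde\ph\tilde\ps(a_i)-a_i\|<\ep$, which exhibits the required completely positive approximation and proves $A\in\mathcal{C}_n$ (resp.\ $A\in\mathcal{C}_{\mathrm{nuc}_n}$).

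The main obstacle is the interplay between a possibly non-injective $\ph$ and the need to build a genuine cpc map $\tilde\ps\colon A\to F$: one must know that the image $B_0=\ph(B)$ still belongs to $\mathcal{C}_n$ (i.e., that $\dr$ and $\dim_{\mathrm{nuc}}$ pass to quotients) so that $B_0$ supplies an honest approximation inside $A$, and one must verify summand-by-summand that post-composition with the isometric inclusion $\io$ preserves both the order-zero condition and the appropriate cpc bound. In case (2) the extra bookkeeping of the factor $n+1$ in $\|\tilde\ph\|$ forces one to tighten the tolerance $\eta$ at the outset, but this is routine once the previous points are in place.
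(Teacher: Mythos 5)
Your proof is correct and follows essentially the same route as the paper's: pass to the image $\rho(B)$ of the local approximating algebra (a quotient, hence still of decomposition rank resp.\ nuclear dimension at most $n$), take a completely positive approximation of the nearby elements there, extend $\psi$ to all of $A$ by Arveson's extension theorem, compose $\varphi$ with the inclusion, and conclude by the triangle inequality. Your explicit tracking of the bound $\|\tilde\varphi\|\leq n+1$ in the nuclear-dimension case, with the correspondingly tightened tolerance $\eta$, is in fact more careful than the paper, which disposes of case (2) with ``similar argument.''
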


\vskip 3mm

\begin{proof}
$(1)$  Let $\{a_1, \dots, a_m\} \subset A$ and $\varepsilon > 0$. 
Since $A$ is a local $\mathcal{C}$ algebra, there is a $B \in \mathcal{C}$ and 
unital *-homomorphism $\rho\colon B \rightarrow A$ such that 
$\dist(a_i, \rho(B)) < \frac{\varepsilon}{4}$, i.e., there are $b_i \in \rho(B)$ such that 
$\|a_i - b_i \| < \frac{\varepsilon}{4}$ for each $1 \leq i \leq m$.

Since ${\rm dr}\rho(B) \leq {\mathrm{dr}}B$  by \cite[(3.3)]{KW},
there is a completely positive approximation property $(F, \psi, \varphi)$ for
$a_1, \dots, a_m$ within $\frac{\varepsilon}{4}$, i.e., $\psi\colon \rho(B) \rightarrow F$ and 
$\varphi\colon F \rightarrow \rho(B)$ are completely positive contruction and 
$\|\varphi\psi(b_i) - b_i\| < \frac{\varepsilon}{4}$, such that $\varphi$ is $n$-decomposable.
Let $\tilde{\psi}$ is an extended completely positive contructive map of 
$\psi$ from $A$ to $F$ by \cite[Proposition~3.4]{Paulsen}. Then we have 
\begin{align*}
\|\varphi\tilde{\psi}(a_i) - a_i\| &= \|\varphi\tilde{\psi}(a_i - b_i + b_i) - a_i\|\\
&\leq \|\varphi\tilde{\psi}(a_i - b_i)\| + \|\varphi\psi(b_i) - b_i| + \|b_i - a_i\|\\
&< \frac{3}{4}\varepsilon < \varepsilon.
\end{align*}

Hence we conclude that ${\mathrm{dr}}(A) \leq n$.

$(2)$ It follows from the similar argument in $(1)$. 
\end{proof}

\vskip 3mm

The following Corollary~\ref{cor:cvering crossed product}(2) 
is an affirmative answer to Problem~9.4 in \cite{WZ}.

\vskip 3mm

\begin{cor}\label{cor:cvering crossed product}
 Let $n \in \N \cup \{0\}$ and $\mathcal{C}_n$ (resp. $\mathcal{C}_{{\rm nuc}_n}$, 
 or $\mathcal{C}_{{\mathrm{lnuc}}} = \cup_{n\in\N}\mathcal{C}_{\mathrm{nuc}_n}$)
be the set of separable unital C*-algebras $D$ 
with ${\rm dr}D \leq n$ (resp. $\dim_{\mathrm{nuc}} D \leq n$, or
locally finite nuclear dimension). 
Let $A$ be a separable unital C*-algebra and 
$\alpha$ be an action of a finite group $G$ on $A$. 
Suppose that $\alpha$ has the Rokhlin property. 
Then we have 
\begin{enumerate}
\item
If $A$ is local $\mathcal{C}_n$ , 
then $\dr(A^\alpha) \leq n$ and ${\mathrm{dr}}(A \rtimes_\alpha G) \leq n$.
\item
If $A$ is local $\mathcal{C}_{{\mathrm{nuc}_n}}$, then
$\dim_{\mathrm{nuc}}(A^\alpha) \leq n$ and $\dim_{\mathrm{nuc}}(A \rtimes_\alpha G) \leq n$.
\item
If $A$ has locally finite nuclear dimension, then 
$A^\alpha$ and $A \rtimes_\alpha G$ have locally finite nuclear dimension.
\end{enumerate}
\end{cor}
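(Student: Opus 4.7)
The plan is to reduce each item to two ingredients: (i)~the inclusion theorem announced as application~(1)(a) of the abstract, which transfers the decomposition rank (respectively nuclear dimension) bound from $A$ to $P$ whenever $E\colon A\to P$ has the Rokhlin property, and (ii)~the identification of $A\rtimes_\alpha G$ with the Watatani basic construction of the inclusion $A^\alpha\subset A$.

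First I would set up the inclusion $A^\alpha\subset A$. The canonical conditional expectation $E(x)=\frac{1}{\#G}\sum_{g\in G}\alpha_g(x)$ is faithful and of finite Watatani index, with $\Index E=\#G$ and an obvious quasi-basis; by Proposition~\ref{prp:group}, the Rokhlin property of $\alpha$ in the sense of Izumi is equivalent to $E$ having the Rokhlin property in the sense of Definition~\ref{Rokhlin}. Combining this with Theorem~\ref{Th:finite decomposition}, which upgrades the hypothesis ``$A$ is local $\mathcal{C}_n$ (resp.\ $\mathcal{C}_{\mathrm{nuc}_n}$)'' to ``$A\in\mathcal{C}_n$ (resp.\ $\mathcal{C}_{\mathrm{nuc}_n}$)'', the inclusion theorem yields $A^\alpha\in\mathcal{C}_n$ (resp.\ $\mathcal{C}_{\mathrm{nuc}_n}$). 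Part~(3) then follows by applying part~(2) with an $n$ chosen after the finite subset of $A^\alpha$ is fixed.

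Second I would pass from $A^\alpha$ to the crossed product. For a finite group action with faithful canonical conditional expectation, the Watatani basic construction $C^{*}\langle A,e_{A^\alpha}\rangle$ is canonically isomorphic to $A\rtimes_\alpha G$, the Jones projection corresponding to the group-averaging projection $\frac{1}{\#G}\sum_{g\in G} u_g$. By Remark~\ref{rmk:b-const}(4) this algebra is isomorphic to $qM_k(A^\alpha)q$ for some $k\in\N$ and some projection $q\in M_k(A^\alpha)$. The classes $\mathcal{C}_n$ and $\mathcal{C}_{\mathrm{nuc}_n}$ are finitely saturated by Proposition~\ref{prp:local property} and are therefore closed under matrix algebras and unital corners, which forces $qM_k(A^\alpha)q$, and hence $A\rtimes_\alpha G$, into the same class as $A^\alpha$; the locally finite nuclear dimension case is handled by the same closure argument, applied to the $n$ produced for each finite subset.

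I expect the only nontrivial content to be the inclusion theorem invoked in the first step; the rest is essentially bookkeeping together with the well-known identification of the crossed product with the basic construction. The inclusion theorem itself is where the embedding $\beta\colon A\to (A^\alpha)^\infty$ of Proposition~\ref{prp:embedding} does the real work: one pushes an $n$-decomposable cpc approximation $(F,\psi,\varphi)$ for a finite subset of $A$ forward to an approximation $F\to(A^\alpha)^\infty$, and then lifts the order zero summands $\varphi^{(j)}$ coordinatewise into $A^\alpha$ using the standard fact that cpc order zero maps from finite-dimensional algebras into a sequence algebra lift to cpc order zero maps at each coordinate. Granting that theorem, the corollary is a formal consequence.
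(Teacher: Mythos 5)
Your route is genuinely different from the paper's, and it has a gap at the point where you pass from the Izumi Rokhlin property of $\alpha$ to the hypotheses of the inclusion theorem. You assert that Proposition~\ref{prp:group} makes the Rokhlin property of $\alpha$ equivalent to $E\colon A\to A^\alpha$ having the Rokhlin property in the sense of Definition~\ref{Rokhlin}. It does not: Definition~\ref{Rokhlin} requires, in addition to $E^\infty(e)=(\Index E)^{-1}\cdot 1$, that the map $x\mapsto xe$ be injective on $A$, and Proposition~\ref{prp:group} only produces the projection. The injectivity is not automatic for a given family of Rokhlin projections when $A$ is not simple: for $A=\C\oplus\C$ with the flip action of $\Z/2\Z$, the constant projections $e_1=(1,0)$ and $e_g=(0,1)$ satisfy Definition~\ref{def:group action}, yet $(0,1)e_1=0$. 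The paper is sensitive to exactly this point --- in the last corollary of Section~5 it invokes simplicity of $A$ precisely to obtain the injectivity before feeding the fixed-point inclusion into the Rokhlin machinery --- and Corollary~\ref{cor:cvering crossed product} carries no simplicity hypothesis. So you cannot cite Theorem~\ref{Thm:finite decomposition index finite} as a black box here. The gap is repairable (one can re-interleave the representing sequences of the $e_g$'s over an enumeration of $G$ to manufacture a Rokhlin projection for which $x\mapsto xe$ is injective, or observe that the proof of Theorem~\ref{Thm:finite decomposition index finite} only uses a unital $*$-homomorphism $\beta\colon A\to (A^\alpha)^\infty$ restricting to the identity on $A^\alpha$, whose construction does not need that injectivity), but some such argument must be supplied.

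For comparison, the paper proceeds in the opposite order and avoids the inclusion machinery entirely: it uses Phillips' local approximation theorem \cite[Theorem~2.2]{Phillips:tracial} to exhibit $A\rtimes_\alpha G$ as a unital local $\mathcal{C}_n$-algebra via unital $*$-homomorphisms from algebras $M_k\otimes fAf$ (which lie in $\mathcal{C}_n$ once Theorem~\ref{Th:finite decomposition} upgrades $A$ to $\mathcal{C}_n$ and finite saturation is applied), then applies Theorem~\ref{Th:finite decomposition} again to the crossed product, and finally realizes $A^\alpha$ as a corner $q(A\rtimes_\alpha G)q$. Two further points about your version: the identification $A\rtimes_\alpha G\cong C^*\langle A,e_{A^\alpha}\rangle$ requires the action to be saturated --- true for Rokhlin actions, but a step, not a tautology --- whereas the paper never needs it; and for part (3) you cannot literally ``apply part (2) with an $n$ chosen after the finite subset is fixed,'' since an algebra with locally finite nuclear dimension need not be a local $\mathcal{C}_{\mathrm{nuc}_n}$-algebra for any single $n$; one has to rerun the local approximation argument with $\mathcal{C}_{\mathrm{lnuc}}$ in place of $\mathcal{C}_{\mathrm{nuc}_n}$, as the paper does.
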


\begin{proof}

We will show that if $A$ is local $\mathcal{C}_n$, C*-algebra, 
$A \rtimes_\alpha G$ is a local $\mathcal{C}_n$ algebra. 

Since $\alpha$ has the Rokhlin property, 
for  any finite set $\mathcal{S} \subset A \rtimes_\alpha G$ and $\varepsilon > 0$, 
there are $n$, projection $f \in A$, and a unital *-homomorphism 
$\varphi\colon M_n \otimes fAf \rightarrow A \rtimes_\alpha G$ such that 
$\dist(a, \varphi(M_n \otimes fAf)) < \varepsilon$ by \cite[Theorem~2.2]{Phillips:tracial}.
Since $A \in \mathcal{C}_n$ by Theorem~\ref{Th:finite decomposition}, 
$M_n \otimes fAf \in \mathcal{C}_n$.
Hence $A \rtimes_\alpha G$ is local $\mathcal{C}_n$ algebra.
Again from Theorem~\ref{Th:finite decomposition} ${\mathrm{dr}}(A \rtimes_\alpha G) \leq n$.
Since $A^\alpha$ is isomorphic to a corner C*-subalgebra 
$q(A \rtimes_\alpha G)q$ for some projection $q \in A \rtimes_\alpha G$, 
we have ${\mathrm{dr}}(A^\alpha) \leq n$ by Proposition~\ref{prp:local property}.

Similarly, if $A$ is local $\mathcal{C}_{{\mathrm{nuc}_n}}$ 
(resp. local $\mathcal{C}_{{\mathrm{lnuc}}}$)
we have $\dim_{{\rm nuc}}(P^\alpha) \leq n$ and 
$\dim_{\mathrm{nuc}}(A \rtimes_\alpha G) \leq n$ 
(resp. $A^\alpha$ and $A \rtimes_\alpha G$ are local $\mathcal{C}_{{\mathrm{lnuc}}}$, that is, 
$A^\alpha$ and $A \rtimes_\alpha G$ have locally finite nuclear dimension).
\end{proof}

\vskip 3mm

\begin{rmk}
When $\alpha$ does not have the Rokhlin property, generally
the estimate in Corollary~\ref{cor:cvering crossed product}
would be not correct. Indeed, let $\alpha$ be an symmetry action 
constructed by Blackadar in \cite{B:symmetry} such that 
$CAR^{\Z/2\Z}$ is not AF C*-algebra. 
Then $\alpha$ does not have the Rokhlin property 
by \cite[Proposition~3.5]{Phillips:cyclic}, and 
$\dim_{\mathrm{nuc}}(CAR^{\Z/2\Z}) \not= 0$, but 
$\dim_{\mathrm{nuc}}(CAR) = 0$.
\end{rmk}

\vskip 3mm

In Corollary~\ref{cor:cvering crossed product}
since $\alpha$ is outer, $A \subset A \rtimes_\alpha G$ is of finite index in the sense 
of Watatani by \cite[Proposition~2.8.6]{Watatani:index}. 
Therefore we shall extend Corollary~\ref{cor:cvering crossed product}
for a pair of unital C*-algebras $P \subset A$ of index finite type. 

\vskip 3mm

By Proposition~\ref{prp:group} when $\alpha$ is an action of a finite group $G$ on a unital C*-algebra $A$, 
then $\alpha$ has the Rokhlin property if and only if 
a canonical conditional expectation from $A$ to the fixed point algebra
$A^\alpha$ has the Rokhlin property.

\vskip 3mm

\begin{thm}\label{Thm:finite decomposition index finite}
For $n \in \N \cup \{0\}$ let $\mathcal{C}_n$ be the set of separable unital C*-algebras $D$ with 
${\mathrm{dr}}D \leq n$ and $\mathcal{C}_{{\mathrm{nuc}_n}}$
be the set of separable unital C*-algebras $D$ 
with $\dim_{\mathrm{nuc}} D \leq n$. 
Let $P \subset A$ be an inclusion of unital C*-algebras and $E\colon A \rightarrow P$ be 
a faithful conditional expectation of index finite. 
%Suppose that each element in $\mathcal{C}$ is semiprojective in the sense of Loring \cite{Loring:lifting}, 
Suppose that $E$ has the Rokhlin property. 
\begin{enumerate} 
\item
If $A$ is a unital, local $\mathcal{C}_n$, C*-algebra, then
$$
{\mathrm{dr}}P \leq n.
$$
\item
If $A$ is a unital, local $\mathcal{C}_{{\mathrm{nuc}_n}}$, C*-algebra, then
$$
\dim_{\mathrm{nuc}} P \leq n.
$$
\end{enumerate}
\end{thm}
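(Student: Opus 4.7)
The plan is to transport a completely positive approximation of $A$ to $P$ through the unital *-homomorphism $\beta|_{A}\colon A \to P^\infty$ supplied by Proposition~\ref{prp:embedding}, and then to lift the resulting maps from $P^\infty$ back to honest maps into $P$ via projectivity of cones over finite-dimensional C*-algebras.

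First, Theorem~\ref{Th:finite decomposition} upgrades the hypothesis on $A$ to $\dr(A)\leq n$ (respectively $\dim_{\mathrm{nuc}}(A)\leq n$). Fix a finite set $S \subset P$ and $\varepsilon > 0$. Regarding $S \subset A$, choose a completely positive approximation $(F,\psi,\varphi)$ of $S$ within $\varepsilon/3$, where $F = \bigoplus_{j=0}^{n} F_j$ is finite dimensional, $\psi\colon A \to F$ is c.p.c., and each restriction $\varphi^{(j)} := \varphi|_{F_j}\colon F_j \to A$ is a c.p.c.\ order-zero map. Set $\psi' := \psi|_P\colon P \to F$ and $\varphi' := \beta \circ \varphi\colon F \to P^\infty$. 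Because $\beta|_A$ is a *-homomorphism, composition preserves order zero, so each $\varphi'^{(j)} = \beta \circ \varphi^{(j)}$ is again c.p.c.\ order zero. Since $\beta$ restricts to the identity on $P$, for every $s \in S$ we get $\|\varphi'\psi'(s) - s\| = \|\beta(\varphi\psi(s) - s)\| \leq \|\varphi\psi(s) - s\| < \varepsilon/3$.

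The main obstacle is to realize $\varphi'$ as a sequential limit of actual $n$-decomposable maps into $P$ rather than into $P^\infty$. For this, invoke the structure theorem for order-zero maps: each $\varphi'^{(j)}$ corresponds to a unique *-homomorphism $\pi_j\colon C_0((0,1]) \otimes F_j \to P^\infty$. The cone $C_0((0,1]) \otimes F_j$ is projective (being the cone over a finite-dimensional C*-algebra), so $\pi_j$ lifts to a sequence of *-homomorphisms $\pi_{j,k}\colon C_0((0,1]) \otimes F_j \to P$ with $\pi_j = [(\pi_{j,k})_k]$ in $P^\infty$; each $\pi_{j,k}$ corresponds to a c.p.c.\ order-zero map $\tilde{\varphi}^{(j)}_k\colon F_j \to P$. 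The sum $\tilde{\varphi}_k := \sum_{j=0}^{n} \tilde{\varphi}^{(j)}_k\colon F \to P$ then represents $\varphi'$ in $P^\infty$, so for all sufficiently large $k$ we have $\|\tilde{\varphi}_k\psi'(s) - s\| < \varepsilon$ for every $s \in S$.

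For nuclear dimension this already suffices, since only each piece $\tilde{\varphi}^{(j)}_k$ is required to be c.p.c. For decomposition rank one additionally needs $\tilde{\varphi}_k$ itself to be c.p.c.; since $\varphi'(1_F) = \beta(\varphi(1_F)) \leq 1$ in $P^\infty$, we have $\tilde{\varphi}_k(1_F) \leq 1 + \delta$ in $P$ for large $k$, and rescaling by $(1+\delta)^{-1}$ (with $\delta$ and the initial slack $\varepsilon/3$ chosen small enough at the outset) yields a c.p.c.\ $n$-decomposable map from $F$ to $P$ that still realizes the approximation. This gives $\dr(P) \leq n$, and the same argument yields $\dim_{\mathrm{nuc}}(P) \leq n$.
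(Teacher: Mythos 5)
Your proof is correct and follows essentially the same strategy as the paper: transport a completely positive approximation of $A$ into $P^\infty$ via the Rokhlin embedding $\beta$ of Proposition~\ref{prp:embedding}, and then lift the order-zero pieces back to genuine maps into $P$ by a projectivity argument. The only real difference is technical --- you lift via projectivity of the cones $C_0((0,1])\otimes F_j$ together with the structure theorem for order-zero maps, whereas the paper invokes semiprojectivity of $C^*(\varphi(F))$ and treats the $n+1$ summands one at a time; your version is interchangeable with the paper's here and streamlines the bookkeeping by first using Theorem~\ref{Th:finite decomposition} to replace the local $\mathcal{C}_n$ hypothesis with $\dr(A)\leq n$.
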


\vskip 3mm

\begin{proof}
$(1)$
At first we will show the case of $n = 0$.
For any finite set $\mathcal{F} = \{a_1, a_2, \dots, a_l\} \subset P$ and $\varepsilon > 0$ 
since ${\mathrm{dr}}A = 0$, there are $B \in \mathcal{C}_0$, a *-homomorphism $\rho \colon B \rightarrow A$, 
a finite set $\{b_1, b_2, \dots, b_l\}$ in $\rho(B)$, and a completely positive approximation property 
$(F, \psi, \phi)$ such that 
\begin{enumerate}
\item $\psi$ and $\varphi$ are completely positive contractive, 
\item
$\varphi$ is a zero map,
\item
$\mathcal{F} \subset_\varepsilon \rho(B)$, that is, 
$\|a_i - b_i \| < \varepsilon $ for 
$1 \leq i \leq l$,
\item
$\|\varphi\circ\psi(b_i) - b_i\| < \varepsilon$ for $1 \leq i \leq l$.
\end{enumerate}

Since $F$ is a finite dimension, by Arveson's extension property 
there is a completely positive contractive $\widetilde{\psi} \colon A \rightarrow F$ 
such that $\widetilde{\psi}_{|\rho(B)} = \psi$. 

Since $C^*(\varphi(F))$ is semiprojective by 
\cite[Proposition~3.2(a)]{Winter:Covering dimension I},
there is a *-homomorphism $\tilde{\beta} \colon C^*(\varphi(F)) \rightarrow \Pi P/\oplus_{j=1}^kP$ such that $\pi_k\circ \tilde{\beta} = \beta$, where $\pi_k\colon \Pi P/\oplus_{j=1}^k P\rightarrow P^\infty$ is the canonical quotient map. 
Write $\tilde{\beta}(x) = (\tilde{\beta}_n(x)) + \oplus_{j=1}^kP$ for $x \in C^*(\varphi(F))$. 

We have, then, 
\begin{align*}
\|[\tilde{\beta}_n(\varphi\circ\psi)(b_i)] - \beta(b_i)\| &= \|\pi_k\circ \tilde{\beta}_n(\varphi\circ\psi)(b_i) - \beta(b_i)\|\\
&= \|\beta(\varphi\circ\psi)(b_i) - \beta(b_i)\|\\
&\leq \|(\varphi\circ\psi)(b_i) - b_i\| < \varepsilon
\end{align*}
for $1 \leq i \leq l$.
Hence 
\begin{align*}
\|[\tilde{\beta}_n(\varphi\circ\psi)(b_i)] - a_i\| 
&= 
\|[\tilde{\beta}_n(\varphi\circ\psi)(b_i)] - \beta(a_i)\|\\
&\leq 
\|[\tilde{\beta}_n(\varphi\circ\psi)(b_i)] - \beta(b_i)\| + \|\beta(b_i) - \beta(a_i)\| \\
&< 2\varepsilon
\end{align*}
for $1 \leq i \leq l$.
Then there is $n \in \N$ such that $\|\tilde{\beta}_n(b_i) - a_i\| < 2\varepsilon$ for $1 \leq i \leq l$. 

Note that $(\tilde{\beta}_n \circ \varphi)\colon F \rightarrow P$ is a zero map.
We have, then, a set $(F, \tilde{\psi},\tilde{\beta}_n \circ \varphi)$ is completely approximation property for 
$\{a_1,a_2, \dots, a_l\}$ within $3\varepsilon$. 
Indeed, 
\begin{align*}
\|((\tilde{\beta}_n\circ\varphi)\circ\tilde{\psi})(a_i) - a_i\| 
&\leq 
\|\tilde{\beta}_n(\varphi\circ\tilde{\psi})(a_i - b_i)\| + \|\tilde{\beta}_n(\varphi\circ\tilde{\psi})(b_i) - a_i\|\\
&\leq \|a_i - b_i\| + \|\tilde{\beta}_n(\varphi\circ\tilde{\psi})(b_i) - a_i\|\\
&< 3\varepsilon
\end{align*}
for $1 \leq i \leq l$.

Therefore, we conclude that ${\mathrm{dr}}P = 0$.

Next we show the general case.
For any finite set $\mathcal{F} = \{a_1, a_2, \dots, a_l\} \subset P$ and 
$\varepsilon > 0$ since ${\mathrm{dr}}A \leq n$, there are $B \in \mathcal{C}_n$, 
a *-homomorphism $\rho \colon B \rightarrow A$, a finite set $\{b_1, b_2, \dots, b_l\}$ in $\rho(B)$, 
and a completely positive approximation property $(F, \psi, \varphi)$ such that 
\begin{enumerate}
\item $\psi$ and $\varphi$ are completely positive contractive, 
\item
There are $n$-central projections $q^{(m)}$ of $F$ such that 
$F = \oplus q^{(m)}Fq^{(m)}$ and 
$\varphi_{|q^{(m)}Fq^{(m)}}$ is order zero.
\item
$\mathcal{F} \subset_\varepsilon \rho(B)$, that is, 
$\|a_i - b_i \| < \varepsilon $ for 
$1 \leq i \leq l$
\item
$\|\varphi\circ\psi(b_i) - b_i\| < \varepsilon$ for $1 \leq i \leq l$.
\end{enumerate}

For $x \in \rho(B)$ we have
\begin{align*}
\varphi\circ \psi(x) 
&= \varphi(\sum_mq^{(m)}\psi(x)q^{(m)})\\
&= \sum_m(\varphi_{|q^{(m)}Fq^{(m)}}\circ q^{(m)}\psi q^{(m)})(x).
\end{align*}
Then each $\varphi_{|q^{(m)}Fq^{(m)}}$ is
a zero map. 
From applying the same argument to each 
$q^{(m)}\psi q^{(m)} \colon \rho(B) \rightarrow q^{(m)}Fq^{(m)}$ and 
$\varphi_{|q^{(m)}Fq^{(m)}} \colon 
q^{(m)}Fq^{(m)} \rightarrow 
C^*(\varphi_{|q^{(m)}Fq^{(m)}}(q^{(m)}Fq^{(m)})$, 
we have completely positive contructive maps $\psi_m\colon A \rightarrow q^{(m)}Fq^{(m)}$ 
and \newline 
$\varphi_m\colon 
C^*(\varphi_{|q^{(m)}Fq^{(m)}}(q^{(m)}Fq^{(m)}) 
\rightarrow P$ such that 
\begin{enumerate}
\item
$(\psi_m)_{|\rho(B)} = q^{(m)}\psi q^{(m)}$ 
\item
$
\|\sum_m(\varphi_m\circ\psi_m)(b_i) - a_i\| < 2n\varepsilon
$
for $1 \leq i \leq l$.
\end{enumerate}

Set $\hat{\varphi} = \sum_m\varphi_m$ and $\hat{\psi} = \sum_m\psi_m$. 
Then $\hat{\varphi}$ is $n$-decomposable.
We show that $(F, \hat{\varphi}, \hat{\psi})$ is completely approximation property 
for ${a_1, a_2, \dots, a_l}$ wihtin $(2n + 1)\varepsilon$.
Indeed,
\begin{align*}
\|(\hat{\varphi}\circ\hat{\psi})(a_i) - a_i\| 
&\leq \| (\hat{\varphi}\circ\hat{\psi})(a_i - b_i)\| + \|(\hat{\varphi}\circ\hat{\psi})(b_i) - a_i\|\\
&\leq \|a_i - b_i\|
+ |\sum_m(\varphi_m\circ\psi_m)(b_i) - a_i\|\\
&\leq \varepsilon + 2n\varepsilon\\
&= (2n + 1)\varepsilon
\end{align*}
for $1 \leq i \leq l$.
Therefore we conclude that ${\mathrm{dr}}P \leq n$.

$(2)$ 
As in the similar argument in $(1)$ 
we conclude that $\dim_{\mathrm{nuc}}P \leq n$.
\end{proof}

\vskip 3mm

\begin{thm}\label{Thm:locally finite decomposition index finite}
Let $P \subset A$ be an inclusion of unital C*-algebras and $E\colon A \rightarrow P$ be 
a faithful conditional expectation of index finite. 
Suppose that $A$ is a local $\mathcal{C}_{{\rm lnuc}}$ C*-algebra and $E$ has the Rokhlin property. 
Then $P$ is local $\mathcal{C}_{{\rm lnuc}}$, that is, $P$ has  locally finite nuclear dimension.
\end{thm}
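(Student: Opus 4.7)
The plan is to show that for each finite set $\mathcal{F} = \{a_1, \ldots, a_l\} \subset P$ and each $\varepsilon > 0$, there is a unital C*-subalgebra $B \subset P$ with $\dim_{\mathrm{nuc}} B < \infty$ and $\mathcal{F} \subset_{\varepsilon} B$; the inclusion $B \hookrightarrow P$ then witnesses that $P$ is a local $\mathcal{C}_{\mathrm{lnuc}}$ C*-algebra. The argument adapts the proof of Theorem~\ref{Thm:finite decomposition index finite}(2), with the decomposition parameter $n$ now allowed to depend on the pair $(\mathcal{F}, \varepsilon)$.

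Fix $\mathcal{F}$ and $\varepsilon$, and choose $\varepsilon' < \varepsilon/4$. Using the hypothesis that $A$ is local $\mathcal{C}_{\mathrm{lnuc}}$, I would first produce an integer $n = n(\mathcal{F}, \varepsilon)$, a unital C*-algebra $B_0$ in the finite saturation of $\mathcal{C}_{\mathrm{nuc}_n}$, and a unital *-homomorphism $\rho \colon B_0 \to A$ together with $b_i \in B_0$ satisfying $\|\rho(b_i) - a_i\| < \varepsilon'$. Because $\dim_{\mathrm{nuc}} B_0 \leq n$, there is a completely positive approximation $(F, \psi, \varphi)$ of $\{b_i\}$ in $B_0$ within $\varepsilon'$ with $\varphi \colon F \to B_0$ an $n$-decomposable completely positive contraction; setting $\widetilde{\varphi} := \rho \circ \varphi \colon F \to A$, the composition is $n$-decomposable and completely positive contractive, and the elements $c_i := \widetilde{\varphi}(\psi(b_i)) \in A$ satisfy $\|c_i - a_i\| < 2\varepsilon'$.

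Next, set $C := C^*(\widetilde{\varphi}(F), 1_A) \subset A$. By \cite[Proposition~3.2(a)]{Winter:Covering dimension I} the algebra $C$ is semiprojective, and the structure theorem for order-zero completely positive maps, which identifies each $C^*(\widetilde{\varphi}^{(j)}(F_j))$ with a quotient of $C_0((0,1]) \otimes F_j$ of nuclear dimension at most one, together with the finite-sum/extension stability of nuclear dimension yields $\dim_{\mathrm{nuc}} C < \infty$. The map $\beta|_A \colon A \to P^\infty$ from Proposition~\ref{prp:embedding} is a unital *-homomorphism that fixes $P$ elementwise, so $\beta|_C \colon C \to P^\infty$ is a unital *-homomorphism. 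Writing $c_0(P) = \overline{\bigcup_k J_k}$ for the increasing chain $J_k = \{(x_n) \in c_0(P) : x_n = 0 \text{ for } n > k\}$, semiprojectivity of $C$ produces some $k_0$ and a unital *-homomorphism $\widetilde{\beta} \colon C \to l^\infty(\N, P)/J_{k_0} \cong \prod_{n > k_0} P$ lifting $\beta|_C$, and coordinate projections yield unital *-homomorphisms $\widetilde{\beta}_k \colon C \to P$ for each $k > k_0$ whose sequence represents $\beta|_C$ modulo $c_0(P)$.

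Finally, the contractivity of $\beta$ gives $\|\beta(c_i) - a_i\| = \|\beta(c_i - a_i)\| \leq 2\varepsilon'$, so any representing sequence $(\gamma_k(c_i))_k$ of $\beta(c_i)$ satisfies $\limsup_k \|\gamma_k(c_i) - a_i\| \leq 2\varepsilon'$; combined with $\|\widetilde{\beta}_k(c_i) - \gamma_k(c_i)\| \to 0$ this yields $\|\widetilde{\beta}_k(c_i) - a_i\| < \varepsilon$ for $k$ sufficiently large. Taking $B := \widetilde{\beta}_k(C) \subset P$ for such a $k$, one obtains a unital C*-subalgebra of $P$ with $\dim_{\mathrm{nuc}} B \leq \dim_{\mathrm{nuc}} C < \infty$ and $\mathcal{F} \subset_{\varepsilon} B$, as required. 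The main obstacle is the finiteness of $\dim_{\mathrm{nuc}} C$: one must carefully unwind $C$ in terms of its $n+1$ order-zero generating pieces, identify each as a quotient of a suspended matrix algebra, and use the behaviour of nuclear dimension under finite extensions to conclude; once this is in hand, the rest of the argument is a direct adaptation of the Rokhlin-lifting construction from Theorem~\ref{Thm:finite decomposition index finite}.
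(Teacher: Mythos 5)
Your overall strategy---push a finite--nuclear-dimension subalgebra of $A$ into $P$ via $\beta$ and a lifting argument---is reasonable in outline, but the two claims on which it rests are exactly the ones that fail, and they are not repaired by the remarks at the end of your proposal. First, $C=C^*(\widetilde{\varphi}(F),1_A)$ need not be semiprojective when $\widetilde{\varphi}$ is only $n$-decomposable with $n\geq 1$: \cite[Proposition~3.2(a)]{Winter:Covering dimension I} concerns a single order zero map, and even in that case the lifting one actually uses comes from projectivity of the cone $C_0((0,1])\otimes F$ (maps \emph{from} the cone lift), not from semiprojectivity of its quotient $C^*(\varphi(F))$. For $n\geq 1$ the algebra $C$ is generated by $n+1$ order zero images together with the unit, and there is no reason for such an algebra to be (weakly) semiprojective. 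Second, and more seriously, $\dim_{\mathrm{nuc}}C<\infty$ is unjustified: a C*-algebra generated by finitely many subalgebras each of nuclear dimension at most $1$ can have infinite nuclear dimension---it can even fail to be nuclear (e.g.\ $C^*_r(F_2)$ is generated by two abelian subalgebras). The ``finite-sum/extension stability'' of nuclear dimension does not apply here, because the pieces $C^*(\widetilde{\varphi}^{(j)}(F_j))$ are neither ideals in $C$ nor direct summands of it; $C$ is an amalgam, not an iterated extension. Since you yourself identify the finiteness of $\dim_{\mathrm{nuc}}C$ as the main obstacle, this is a genuine gap rather than a routine verification.

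The paper avoids both problems by never forming the generated subalgebra $C$. In the proof of Theorem~\ref{Thm:finite decomposition index finite} one works with the completely positive approximation $(F,\psi,\varphi)$ itself: each order zero summand $\varphi|_{q^{(m)}Fq^{(m)}}$ corresponds to a $*$-homomorphism from the projective C*-algebra $C_0((0,1])\otimes q^{(m)}Fq^{(m)}$, so its composition with $\beta$ from Proposition~\ref{prp:embedding} lifts along $l^\infty(\N,P)\to P^\infty$ to order zero completely positive contractions $\varphi_m\colon q^{(m)}Fq^{(m)}\to P$; summing over $m$ and composing with the Arveson extension of $\psi$ yields an $n$-decomposable completely positive approximation of $\mathcal{F}$ \emph{inside} $P$, with $n=n(\mathcal{F},\varepsilon)$ now allowed to vary. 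If you want to retain your subalgebra-based formulation of locally finite nuclear dimension, you must either restrict to the case where the approximating map is genuinely order zero (so that projectivity of a single cone applies and the image is a quotient of that cone, hence of nuclear dimension at most $1$), or replace the claim about $C$ by this summand-by-summand lifting.
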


\begin{proof}
The proof is similar to that of Theorem~\ref{Thm:finite decomposition index finite}.
\end{proof}

%\begin{qst} Is it true that the previous theorem holds for the tracial approximation property ??
%\end{qst}

\vskip 5mm

%%%%%%%%%%%%%%%%%%%%%%%%%%%%%%%%%%%%%%%%%%%%
\section{The order on projections determined by traces}

\begin{dfn}
Let $A$ be a unital C*-algebra. We denote by $T(A)$ the set of all tracial states on $A$, 
equipped with the weak* topology. For any element of $T(A)$, we use the same letter for its 
standard extension to $M_n(A)$ for arbitrary $n$, and to 
$M_\infty(A) = \cup_{n=1}^\infty M_n(A)$.

We say that the order on projections over  a unital C*-algebra $A$ is determined 
by traces if whenever $p, q \in M_\infty(A)$ are projections such that $\tau(P) < \tau(q)$ for 
all $\tau \in T(A)$, then $p \preceq q$.
\end{dfn}

In this section we show that let $E\colon A \rightarrow P$ be of index finite, and 
suppose that $E$ has the Rokhlin property and the order of projections in $A$ is determined by traces, 
then the order of projections in $P$ is determined by traces.

The following is a generalization of \cite[Proposition~4.14]{OP:Rohlin}.

\vskip 3mm

\begin{prp}\label{prp:tracial states}
Let $E\colon A \rightarrow P$ be of index finite type and has the Rokhlin property.
Then the restriction map defines a bijection from the set $T(A)$ to the set $T(P)$. 
\end{prp}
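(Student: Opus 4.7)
The plan is to exploit the unital injective $*$-homomorphism $\beta\colon A\to P^\infty$ of Proposition~\ref{prp:embedding}, which fixes $P$ pointwise, together with ultrafilter extensions of tracial states to sequence algebras. I would fix once and for all a free ultrafilter $\omega$ on $\N$; for any tracial state $\mu$ on a unital C*-algebra $D$, write $\mu^\omega$ for the ultralimit $\mu^\omega([(d_n)]) = \lim_{n\to\omega}\mu(d_n)$, which is again a tracial state on $D^\infty$.

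For surjectivity, given $\sigma\in T(P)$, I would set $\tau := \sigma^\omega\circ \beta|_A$. Both the unitality and the tracial property of $\tau$ follow from the fact that $\beta|_A$ is a unital $*$-homomorphism and $\sigma^\omega$ is tracial; and $\tau|_P = \sigma$ because $\beta$ fixes $P$ elementwise. This produces a tracial extension of every $\sigma \in T(P)$ to $A$.

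For injectivity, suppose $\tau_1,\tau_2\in T(A)$ both restrict to the same $\sigma\in T(P)$, and extend each to a tracial state $\tau_i^\omega$ on $A^\infty$ via the same $\omega$. Since $\tau_1^\omega$ and $\tau_2^\omega$ agree with $\sigma^\omega$ on $P^\infty$, the problem reduces to showing that, for every $a\in A$,
\[
\tau_i^\omega(a) = \tau_i^\omega(\beta(a)),
\]
because the right-hand side equals $\sigma^\omega(\beta(a))$ and so depends only on $\sigma$. The defining relation $ae=\beta(a)e$, together with $e\in A'\cap A^\infty$ and the adjoint identity $e\beta(a) = \beta(a)e$ (which follows by starring $ae=\beta(a)e$ and using that $\beta$ is a $*$-homomorphism), forces $c := a-\beta(a)\in A^\infty$ to commute with $e$ and to satisfy $ce = 0$.

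The main obstacle is to upgrade the orthogonality $\tau_i^\omega(ce)=0$ to the actual identity $\tau_i^\omega(c)=0$. For this I would apply $E^\infty$ to the equation $ae=\beta(a)e$: left $P^\infty$-linearity together with the normalization $E^\infty(e)=(\Index E)^{-1}\cdot 1$ yields $E^\infty(ae) = (\Index E)^{-1}\beta(a)$. Amplifying by the positive central element $\Index E$ and invoking the tracial property of $\tau_i^\omega$ on $A^\infty$ should allow one to recover $\tau_i^\omega(a)$ in terms of $\tau_i^\omega(\beta(a))$. This Markov-type trace identity---equivalent to the statement that under the Rokhlin hypothesis every tracial state on $A$ is $E$-invariant---is the technical heart of the proof, and I would carry it out in direct analogy with the proof of Proposition~4.14 in \cite{OP:Rohlin}.
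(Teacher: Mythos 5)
Your surjectivity argument is exactly the paper's: push a trace $\sigma\in T(P)$ forward along the embedding $\beta\colon A\to P^\infty$ of Proposition~\ref{prp:embedding} (the paper uses the sequence-algebra extension $\sigma^\infty$ where you use an ultralimit $\sigma^\omega$, an immaterial difference), and use $\beta|_P=\mathrm{id}_P$ to see that $\sigma^\infty\circ\beta$ restricts back to $\sigma$. That half is fine.

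The injectivity half is where you diverge from the paper, and it contains a genuine gap. You reduce injectivity to the identity $\tau^\omega(a)=\tau^\omega(\beta(a))$ for $a\in A$, and you correctly isolate the hard step: passing from $(a-\beta(a))e=0$, hence $\tau^\omega((a-\beta(a))e)=0$, to $\tau^\omega(a-\beta(a))=0$. But the mechanism you propose does not close this. Applying $E^\infty$ to $ae=\beta(a)e$ indeed gives $E^\infty(ae)=(\Index E)^{-1}\beta(a)$, yet to convert this into information about $\tau^\omega(a)$ you must compose with $\tau^\omega$, and $\tau^\omega\circ E^\infty$ is not $\tau^\omega$ (nor even a trace on $A^\infty$ in general); the missing ingredient is precisely the $E$-invariance of $\tau$, which you yourself note is equivalent to what you are trying to prove. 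A projection $e\in A'\cap A^\infty$ with $ce=0$ carries no information about $\tau^\omega(c)$ unless one can quantify that $e$ is ``large'' for $\tau^\omega$, and the normalization $E^\infty(e)=(\Index E)^{-1}\cdot 1$ only becomes usable once the trace is known to factor through $E$. So as written the argument is circular. The paper avoids all of this: its injectivity argument does not use the Rokhlin property at all. It takes a quasi-basis $\{(u_i,u_i^*)\}$ for $E$, writes $x=\sum_i E(xu_i)u_i^*$, and uses the Schwarz inequality to show that a tracial functional vanishing on $P$ satisfies $|\tau(E(xu_i)u_i^*)|^2\leq\|u_i\|^2\,|\tau(E(xu_i)E(xu_i)^*)|=0$, hence vanishes on $A$. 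You should either adopt that elementary quasi-basis argument, or supply an actual proof of the Markov-type identity $\tau=\tau|_P^\infty\circ\beta$ rather than asserting it by analogy with \cite{OP:Rohlin}.
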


\begin{proof}
Since $E\colon A \rightarrow P$ has the Rokhlin property, there is an injective *-homomorphism $\beta$
from $A$ to $P^\infty$ such that $\beta(x) = x$ for all $x \in P$ by \cite{OT}. 
Then we show that the map 
$T(P) \ni \tau \mapsto \tau^\infty \circ \beta \in T(A)$ is an inverse of the restriction map, 
where $\tau^\infty$ be the extended tracial stae of $\tau$ on $P^\infty$. 

Let $R$ be the restriction map of $\tau \in T(A)$ to $P$. 
For $\tau \in T(P)$ $\tau^\infty \circ \beta$ is a tracial state on $A$. 
Then for any $a \in P$
\begin{align*}
R(\tau^\infty \circ \beta)(a) &= (\tau^\infty \circ \beta)(a)\\
&=\tau^\infty(a)\\
&= \tau(a).\\
\end{align*}
Hence  $R$ is surjective.

Suppose that $R(\tau) = 0$ for $\tau \in T(A)$.
Since $E\colon A \rightarrow P$ is of index finite type, there exists a quasi-basis 
$\{(u_i, u_i^*)\}_{i=1}^n \subset A \times A$ such that 
for any $x \in A$ 
$$
x = \sum_{i=1}^nE(xu_i)u_i^* = \sum_{i=1}^nu_iE(u_i^*x).
$$
Since $E$ is completely positive, $\tau \circ E$ is completely positive. 
We have then by \cite{MC}
\begin{align*}
|\tau(E(xu_i)u_i^*)\tau(E(xu_i)u_i^*)^*| &\leq |\tau(E(xu_i)u_i^*(E(xu_i)^*u_i^*)^*)|\\
&= |\tau(E(xu_i)u_i^*u_iE(xu_i)^*)|\\
&\leq \|u_i\|^2|\tau(E(xu_i)E(xu_i)^*)| = 0.
\end{align*}
Hence $\tau(E(xu_i)u_i^*) = 0$. 
This means that 
\begin{align*}
\tau(x) &= \sum_{i=1}^n\tau(E(xu_i)u_i^*)  = 0.
\end{align*}
Hence $\tau = 0$, and $R$ is injective.
\end{proof}

\vskip 5mm

\begin{thm}\label{thm:Blackadar's comparison}
Let $A$ be a unital C*-algebra such that the order on projections over $A$ is 
determined by traces. Let $E\colon A \rightarrow P$ be of index finite type. 
Suppose that $E$ has the Rokhlin property. 
Then the order on projections over $P$ is determined by traces.
\end{thm}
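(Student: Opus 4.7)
The plan is to convert the trace condition on $P$ into a Murray--von Neumann subequivalence inside $A$, push that subequivalence into $P^\infty$ via the Rokhlin embedding $\beta\colon A\to P^\infty$, and then perturb the resulting partial isometry so that it already lives in $P$.

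For the first two steps, fix projections $p,q\in M_N(P)$ (for some $N$) with $\tau(p)<\tau(q)$ for every $\tau\in T(P)$. By Proposition~\ref{prp:tracial states}, the restriction map $T(A)\to T(P)$ is a bijection, so for every $\sigma\in T(A)$ we have
\[
\sigma(p)=(\sigma|_P)(p)<(\sigma|_P)(q)=\sigma(q),
\]
and the hypothesis on $A$ yields a partial isometry $v\in M_N(A)$ with $v^*v=p$ and $vv^*\le q$. By Proposition~\ref{prp:embedding}, the Rokhlin property supplies a unital injective $*$-homomorphism $\beta\colon A\to P^\infty$ which is the identity on $P$. Its matrix amplification $\beta\otimes\mathrm{id}_{M_N}\colon M_N(A)\to M_N(P^\infty)\cong M_N(P)^\infty$ still fixes every element of $M_N(P)$, so $\beta(p)=p$, $\beta(q)=q$, and $\beta(v)$ is a partial isometry witnessing $p\preceq q$ in $M_N(P^\infty)$.

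It remains to descend from $M_N(P^\infty)$ to $M_N(P)$. Lift $\beta(v)$ to a bounded representative $(v_n)\subset M_N(P)$. The relations $v^*v=p$ and $qv=v$ (the latter following from $vv^*\le q$ together with $v=vv^*v$) give $\|v_n^*v_n-p\|\to 0$ and $\|qv_n-v_n\|\to 0$. Put $w_n=qv_np$; then $w_n^*w_n=pv_n^*qv_np\to p$ in the corner $pM_N(P)p$, so for all sufficiently large $n$ the element $w_n^*w_n$ is invertible inside that corner. Define
\[
u_n=w_n(w_n^*w_n)^{-1/2},
\]
the inverse being taken via functional calculus in $pM_N(P)p$. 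A direct calculation gives $u_n^*u_n=p$, while $u_nu_n^*=w_n(w_n^*w_n)^{-1}w_n^*$ satisfies $qu_nu_n^*=u_nu_n^*q=u_nu_n^*$ because $w_n=qw_n$; hence $u_nu_n^*\le q$. Thus $u_n$ realises $p\preceq q$ in $M_N(P)$, as required.

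The conceptual heart of the argument is the second paragraph: it is Proposition~\ref{prp:embedding} that allows trace data on $P$, once converted into a Murray--von Neumann subequivalence inside $A$, to be re-imported into $P^\infty$. The third paragraph is a standard perturbation--and--polar-decomposition exercise; the only subtlety is to multiply by $q$ on the left and by $p$ on the right \emph{before} polishing to a partial isometry, so as to preserve the dominance by $q$ and to land back inside the corner $pM_N(P)p$.
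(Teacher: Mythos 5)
Your proof is correct and follows essentially the same route as the paper's: the trace bijection of Proposition~\ref{prp:tracial states} transfers the hypothesis from $T(P)$ to $T(A)$, the hypothesis on $A$ produces the subequivalence there, and the Rokhlin embedding $\beta$ of Proposition~\ref{prp:embedding} pushes it into $P^\infty$. The only differences are cosmetic: you amplify $\beta$ to matrices instead of replacing the inclusion by $M_N(P)\subset M_N(A)$, and you write out the final descent from $M_N(P)^\infty$ to $M_N(P)$ (the lift-and-polar-decomposition step), which the paper asserts in a single line; both are routine and correctly executed.
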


\begin{proof}
Since  $E \otimes id\colon A \otimes M_n \rightarrow P \otimes M_n$ is of index 
finite type and has the Rokhlin property, it suffices to verify the condition that  
whenever $p, q \in P$ are projections such that $\tau(p) < \tau(q)$ for 
all $\tau \in T(P)$, then $p \preceq q$.

Let $p, q \in P$ be projections such that $\tau(p) < \tau(q)$ for 
all $\tau \in T(P)$. 
From Proposition~\ref{prp:tracial states} 
$\tau(p) < \tau(q)$ for all $\tau \in T(A)$.
Since the order on projections over $A$ is 
determined by traces, $p \preceq q$ in $A$. 

Since $E\colon A \rightarrow P$ has the Rokhlin property, 
there is an injective *-homomorphism $\beta:A \rightarrow P^\infty$ 
such that $\beta(x) = x$ fro $x \in P$ by \cite{OT}. 
Then $\beta(p) \preceq \beta(q)$ in $P^\infty$, that is, 
$p \preceq q$ in $P^\infty$. Hence $p \preceq q$ in $P$.
Therefore 
the order on projections over $P$ is 
determined by traces.
\end{proof}

%%%%%%%%%%%%%%%%%%%%%%%%%%%%%%%%%%%%%%%%%%%%%%%%%%%
\section{Pureness for C*-algebras}

In this section we consider the pureness for a pair $P \subset A$ of unital C*-algebras, 
which is defined in \cite{Winter:Nuclear dimension}, and show that if the inclusion $P \subset A$ has the Rokhlin proeprty 
and $A$ is pure, then $P$ is pure. 

\vskip 3mm

\begin{dfn}\label{D:Cuntz semigroup}(\cite{KR}\cite{Rordam:UHF})
Let $M_{\infty}(A)$ denote the algebraic limit of 
the direct system $(M_n(A), \phi_n)$, where 
$\phi_n\colon M_n(A) \rightarrow M_{n+1}(A)$ is given by 
$$
a \mapsto \left(\begin{array}{cc}
a&0\\
0&0
\end{array}
\right).
$$
Let $M_{\infty}(A)_+$ (resp. $M_n(A)_+$) denote the positive elements 
in $M_\infty(A)$ (resp. $M_n(A)_+)$. 
Given $a, b \in M_\infty(A)_+$, we say that $a$ is {\it Cuntz subequivalent} 
to $b$ (written $a \preceq b)$ if there is a sequence $(v_n)_{n=1}^\infty$
of elements in some $M_k(A)$ such that 
$$
\|v_nbv_n^* - a\| \rightarrow 0 \ (n \rightarrow \infty).
$$
We say that $a$ and $b$ are {\it Cuntz equivalent} if 
$a \preceq b$ and $b \preceq a$. 
This relation is an equivalent relation, and we write $\langle a\rangle$ for the equivalence
class of $a$. 
The set $W(A) := M_\infty(A)_+/\sim$ becomes a positive ordered Abelian semigroup 
when equipped with the operation 
$$
\langle a\rangle + \langle b\rangle = \langle a \oplus b\rangle
$$
and the partial order 
$$
\langle a\rangle \leq \langle b\rangle \Longleftrightarrow a \preceq b.
$$
\end{dfn}

\vskip 3mm

Let $T(A)$ and $QT(A)$  denote the tracial state and 
the space of the normalised 2-quasitraces on $A$ (\cite[Definition~II.\ 1.\ 1]{BH}), respectively. 
Note that $T(A) \subset QT(A)$ and equality holds when $A$ is exact \cite{Ha:quasitrace}.
Let $S(W(A))$ denote the set of additive and order preserving maps $d$ 
from $W(A)$ to $\R^+$ having the property $d(\langle 1_A\rangle) = 1$. 
Such maps are called generally states and in the case of particular 
of a C*-algebra, they are termed dimension functions. 

Given $\tau$ in $QT(A)$, one may define a map $d_\tau \colon M_\infty(A)_+ \rightarrow \R^+$
by 
$$
d_\tau(a) = \lim_{n\rightarrow\infty}\tau(a^\frac{1}{n}).
$$
This map is lower semicontinuous, and depends only on the 
Cuntz equivalence class of $a$. Then $d_\tau \in S(W(A))$. 
Such states are called {\it lower semicontinuous dimension functions}, and 
the set of them is denoted $LDF(A)$. 
It was proved in \cite[Theorem~II.\ 4.\ 4]{BH} that $QT(A)$ is a simplex and 
the map from $QT(A)$ to $LDF(A)$ by $\tau \mapsto d_\tau$ in the above  
is bijective and affine. 

\vskip 3mm

\begin{dfn}\label{D:strict comparison}
A C*-algebra $A$ is called to have  {\it strict comparison of positive elements} 
or simply {\it strict comparison} if for all $a, b \in M_\infty(A)_+$ 
$A$ has the property that 
$a \preceq b$ whenever $s(a) < s(b)$ for every $s \in LDF(A)$.
\end{dfn}

\vskip 3mm

\begin{rmk}
When $A$ is a simple, unital, C*-algebra,  $A$ has the strict comparison property if 
and only if $W(A)$ is {\it almost unperforated} by \cite[Corollary~4.6]{Rordam:Z-absorbing}. 
Recall that $W(A)$ is almost unperforated if for $x, y \in W(A)$ and for all 
natural numbers $n$ one has $(n + 1)x \leq ny$ implies that $x \leq y$.
\end{rmk}

\vskip 3mm

The following should be  well known. So, we omit its proof.

\vskip 3mm

\begin{lem}\label{lem:strict comparison}
Let $A$ be a unital C*-algebra and suppose that $W(A)$ has 
the strictly comparison property. Then we have
\begin{enumerate}
\item
For $n \in \N$ $M_n(A)$ has the strict comparison.
\item
For a nonzero hereditary C*-subalgebra $B$ of $A$
$B$ has the strict comparison property.
\end{enumerate}
\end{lem}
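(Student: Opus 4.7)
The plan is to route both parts of the lemma through the characterization of strict comparison by almost unperforation of the Cuntz semigroup (recorded in the Remark preceding the lemma), since almost unperforation is an intrinsic order-theoretic property of $W(A)$ and is therefore transported by any order isomorphism and inherited by any ordered submonoid. A direct dimension-function argument is possible but runs into normalization issues at the outset, so almost unperforation is the natural vehicle.

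For part~(1), I would use the canonical identifications $M_k(M_n(A)) \cong M_{kn}(A)$ to obtain an order isomorphism $W(M_n(A)) \cong W(A)$, with $\langle 1_{M_n(A)}\rangle$ corresponding to $n\langle 1_A\rangle$ and each $\tilde s \in LDF(M_n(A))$ corresponding (up to the factor $n$) to an element of $LDF(A)$ on matching positive elements. Since scaling preserves strict inequalities, both strict comparison and almost unperforation transport directly from $A$ to $M_n(A)$.

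For part~(2), let $B$ be a nonzero hereditary C*-subalgebra of $A$. The first step is to verify that the canonical map $W(B) \to W(A)$ is an order embedding: if $a, b \in M_\infty(B)_+$ and $v_n b v_n^* \to a$ for some $v_n \in M_\infty(A)$, then sandwiching with appropriate powers of $b$ (using that $\overline{bAb} \subset B$) refines the $v_n$ to approximants in $M_\infty(B)$, so that $a \preceq_A b$ forces $a \preceq_B b$. With $W(B)$ identified as an ordered submonoid of $W(A)$ carrying the inherited order, almost unperforation of $W(A)$ descends to $W(B)$ automatically.

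The main obstacle will be closing the circle: translating almost unperforation of $W(B)$ back into the dimension-function formulation of strict comparison for $B$ in the non-simple setting, since the equivalence in the preceding Remark cites \cite{Rordam:Z-absorbing} only in the simple case. The subtle point is that a dimension function $\tau \in LDF(A)$ may restrict to the zero function on $M_\infty(B)_+$, so the hypothesis on $B$ does not straightforwardly imply the hypothesis on $A$; this is exactly what blocks a direct argument and forces the detour through almost unperforation. In the applications in Section~5, however, $A$ is assumed exact, and exactness passes both to $M_n(A)$ and to hereditary subalgebras, so in that context the required equivalence is available and the conversion causes no genuine difficulty.
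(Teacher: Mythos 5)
A preliminary caveat: the paper supplies no proof to compare against --- the lemma is introduced with ``The following should be well known. So, we omit its proof.'' --- so your proposal must stand on its own. Part (1) is fine as you describe it: $M_\infty(M_n(A))\cong M_\infty(A)$ gives an order isomorphism $W(M_n(A))\cong W(A)$, and $LDF(M_n(A))$ corresponds to $LDF(A)$ up to the positive scalar $n$, which preserves strict inequalities; no detour through almost unperforation is needed there. Likewise, the order-embedding $W(B)\hookrightarrow W(A)$ for a hereditary subalgebra is a correct and standard step.

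The genuine gap is in part (2), and it is the one you name but do not close. Your route requires two conversions that the Remark preceding the lemma only supplies for \emph{simple} $A$: that strict comparison of $A$ (in the $LDF$ formulation of Definition 5.4, with states normalized at $\langle 1_A\rangle$) implies $W(A)$ is almost unperforated, and that almost unperforation of $W(B)$ implies strict comparison of $B$. Neither implication is addressed, and the appeal to exactness does not help: exactness gives $QT=T$ by Haagerup's theorem, but it has no bearing on the discrepancy between unit-normalized dimension functions and the intrinsic order of the Cuntz semigroup, which is precisely where the non-simple case differs (R\o rdam's result produces $a\preceq b$ from almost unperforation when $d(a)<d(b)$ for all states normalized at $\langle b\rangle$, not at $\langle 1\rangle$; even the easy direction, strict comparison implies almost unperforation, uses that every $d_\tau$ is faithful, which fails outside the simple case). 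The workable repair is the direct dimension-function argument you set aside, in the only generality actually needed: in Proposition 5.7 the hereditary subalgebra is the \emph{full} corner $qM_n(P)q$ arising from the basic construction, and for a full hereditary subalgebra $B$ no $d_\tau\in LDF(A)$ can vanish on $M_\infty(B)_+$ (if $\tau$ kills $B_+$ it kills the positive part of $\overline{ABA}=A$, hence $\tau(1)=0$), so every element of $LDF(A)$ restricts, after normalization, to an element of $LDF(B)$; then $s(a)<s(b)$ for all $s\in LDF(B)$ yields $s'(a)<s'(b)$ for all $s'\in LDF(A)$, strict comparison of $A$ gives $a\preceq_A b$, and your order-embedding gives $a\preceq_B b$. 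For an arbitrary nonzero hereditary subalgebra of a non-simple $A$, the statement needs either this fullness hypothesis or a reformulation of strict comparison; your proposal as written does not establish it.
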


\vskip 3mm

\begin{prp}\label{prp:strict comparison}
Let $A$ be a unital exact C*-algebra which has the strict comparison property.
Let $E\colon A \rightarrow P$ be of index finite type. 
Suppose that $E$ has the Rokhlin property. 
Then we have 
\begin{enumerate}
\item $P$ has the strict comparison property.
\item The basic construction $C^*\langle A, e_P\rangle$ has the strict comparison property.
\end{enumerate}
\end{prp}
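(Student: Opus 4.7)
The overall strategy is to reduce strict comparison for $P$ to strict comparison for $A$, using the bijection $T(A) \leftrightarrow T(P)$ of Proposition~\ref{prp:tracial states} and the embedding $\beta\colon A \to P^\infty$ of Proposition~\ref{prp:embedding}, and then to derive (2) from Watatani's structure theorem for $C^*\langle A,e_P\rangle$ together with Lemma~\ref{lem:strict comparison}. Note first that $P$, being a C*-subalgebra of the exact C*-algebra $A$, is itself exact, so $QT(P)=T(P)$ and strict comparison for $P$ is equivalent to the implication: whenever $p,q\in M_\infty(P)_+$ satisfy $d_\tau(p)<d_\tau(q)$ for every $\tau\in T(P)$, one has $p\preceq q$ in $P$.

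For (1), fix such $p,q$. For any $\sigma\in T(A)$ the restriction $\sigma|_P$ is a tracial state on $P$ by Proposition~\ref{prp:tracial states}, and since $p^{1/n},q^{1/n}$ lie in $M_\infty(P)$ one has $d_\sigma(p)=d_{\sigma|_P}(p)$ and similarly for $q$; hence $d_\sigma(p)<d_\sigma(q)$ for every $\sigma\in T(A)$, and the strict comparison of $A$ yields $p\preceq q$ in $M_\infty(A)$. To push this back into $P$, amplify $\beta$ to a *-homomorphism $\beta\otimes\id_{M_k}\colon M_k(A)\to M_k(P^\infty)$; since *-homomorphisms preserve Cuntz subequivalence and $\beta$ restricts to the identity on $P$, we obtain $p\preceq q$ in $M_\infty(P^\infty)$. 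Finally, choose $v_n\in M_k(P^\infty)$ with $\|v_n q v_n^*-p\|\to 0$, lift each $v_n$ to a bounded sequence $(v_n^{(j)})_j$ in $M_k(P)$, and use that the $P^\infty$-norm equals the $\limsup_j$ of the norms of representatives: a diagonal choice $j=j_n$ produces $w_n\in M_k(P)$ with $\|w_n q w_n^*-p\|\to 0$, so $p\preceq q$ in $P$.

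Part (2) then follows formally. Remark~\ref{rmk:b-const}(4) identifies $C^*\langle A,e_P\rangle$ with a corner $qM_n(P)q$ of a matrix algebra over $P$; since $P$ has strict comparison by (1), Lemma~\ref{lem:strict comparison} applied first to $M_n(P)$ and then to the hereditary C*-subalgebra $qM_n(P)q$ yields strict comparison for $C^*\langle A,e_P\rangle$. The main delicacy in the entire argument is the descent from $P^\infty$ back to $P$: one must verify that a Cuntz subequivalence between two elements of $P$ which holds in the sequence algebra $P^\infty$ can always be witnessed by representatives inside $P$ itself. This is the standard diagonal extraction recalled above, but it is the one place where the proof relies on more than purely formal manipulation; everything else is bookkeeping gluing together Proposition~\ref{prp:tracial states}, Proposition~\ref{prp:embedding}, and the strict comparison hypothesis on $A$.
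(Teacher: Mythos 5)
Your proof is correct and follows essentially the same route as the paper: transfer the dimension-function inequality from $T(P)$ to $T(A)$ via Proposition~\ref{prp:tracial states}, apply strict comparison in $A$, push the Cuntz subequivalence into $P^\infty$ via the embedding $\beta$ of Proposition~\ref{prp:embedding} and descend to $P$, then obtain (2) from the identification $C^*\langle A,e_P\rangle\cong qM_n(P)q$ and Lemma~\ref{lem:strict comparison}. The only differences are cosmetic: you handle matrix amplification by amplifying $\beta$ where the paper invokes the Rokhlin property of $E\otimes\id$, and you spell out the diagonal-extraction descent from $P^\infty$ to $P$ that the paper leaves implicit.
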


\vskip 3mm

\begin{proof}
Note that $P$ is also exact (\cite{Was76}, \cite{Kir95}). Hence 
we know that $QT(A) = T(A)$ and $QT(P) = T(P)$  by \cite{Ha:quasitrace}.

Since  $E \otimes id \colon A \otimes M_n \rightarrow P \otimes M_n$ is of index 
finite type and has the Rokhlin property, it suffices to verify the condition that  
whenever $a, b \in P$ are positive elements such that $d_\tau(a) < d_\tau(b)$ for 
all $\tau \in T(P)$, then $a \preceq b$. 

Let $a, b \in P$ be projections such that $d_\tau(a) < d_\tau(b)$ for 
all $\tau \in T(P)$. 
From Proposition~\ref{prp:tracial states} 
$d_\tau(a) < d_\tau(b)$ for all $\tau \in T(A)$.
Since $A$ has the strict comparison property, $a \preceq b$ in $A$. 

Since $E\colon A \rightarrow P$ has the Rokhlin property, 
there is an injective *-homomorphism $\beta:A \rightarrow P^\infty$ 
such that $\beta(x) = x$ for $x \in P$ by Proposition~\ref{prp:embedding}. 
Then $\beta(a) \preceq \beta(b)$ in $P^\infty$, that is, 
$a \preceq b$ in $P^\infty$. Hence $a \preceq b$ in $P$.
Therefore 
$P$ has the strict comparison property.

Since $C^*\langle A, e_P\rangle$ is isomorphic to the corner C*-algebra 
$qM_n(P)q$ for some $n \in \N$ and a projection $q \in M_n(P)$,
from Lemma~\ref{lem:strict comparison} we conclude that $C^*\langle A, e_P\rangle$ has the strict comparison property.
\end{proof}

\vskip 3mm

\begin{dfn}\label{D:m-almost divisible}
Let $A$ be a unital C*-algebra. 
$A$ is said to have {\it almost divisible} Cuntz semigroup, if for any positive contraction
$a \in M_\infty(A)$ and $0 \not= k \in \N$ there is $x \in W(A)$ such that 
$$
k\cdot x \leq \langle a\rangle \leq (k + 1)\cdot x.
$$
\end{dfn}

\vskip 3mm

\begin{prp}\label{prp:almost divisible}
Let $A$ be a unital  C*-algebra of stable rank one which has 
almost divisible Cuntz semigroup $W(A)$.
Let $E\colon A \rightarrow P$ be of index finite type. 
Suppose that $E$ has the Rokhlin property. 
Then we have 
\begin{enumerate}
\item
$P$ has an almost divisible Cuntz semigroup $W(P)$.
\item
The basic construction $C^*\langle A, e_P\rangle$ has an almost divisible Cuntz semigroup 
$W(C^*\langle A, e_P\rangle)$.
\end{enumerate}
\end{prp}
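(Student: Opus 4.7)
The plan is to use the dimension drop characterisation of almost divisibility cited in the introduction: by \cite[Proposition~2.4]{RW}, for a C*-algebra $B$ of stable rank one, $W(B)$ is almost divisible if and only if for each $k \geq 1$ there is a unital *-homomorphism $Z_{k,k+1} \to B$. I would use this in both directions, combined with the embedding $\beta \colon A \to P^\infty$ of Proposition~\ref{prp:embedding} and the semiprojectivity of the dimension drop algebras.

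For (1), I first observe that stable rank one passes from $A$ to $P$ under the Rokhlin hypothesis for a finite index inclusion, a known permanence result for Rokhlin inclusions. Fix $k \geq 1$. By \cite[Proposition~2.4]{RW} applied to $A$, there is a unital *-homomorphism $\phi_k \colon Z_{k,k+1} \to A$; composing with $\beta \colon A \to P^\infty$ (which restricts to the identity on $P$) yields a unital *-homomorphism $\beta \circ \phi_k \colon Z_{k,k+1} \to P^\infty$. Since $\gcd(k,k+1)=1$, the dimension drop algebra $Z_{k,k+1}$ is semiprojective. Presenting $c_0(P)$ as the norm closure of the increasing union of the ideals $I_N = \{(a_j) \in l^\infty(\N, P) : a_j = 0 \hbox{ for all } j > N\}$, semiprojectivity in its unital form yields a unital lift $\widetilde{\phi_k} \colon Z_{k,k+1} \to l^\infty(\N, P)/I_N$ for some $N$; evaluating at any coordinate $j > N$ produces a unital *-homomorphism $\psi_k \colon Z_{k,k+1} \to P$. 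Applying \cite[Proposition~2.4]{RW} to $P$ in the reverse direction now gives that $W(P)$ is almost divisible, establishing (1).

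For (2), Remark~\ref{rmk:b-const}(4) identifies $C^*\langle A, e_P\rangle$ with a corner $qM_n(P)q$ of $M_n(P)$ for some $n \in \N$ and projection $q \in M_n(P)$, which is in particular a hereditary C*-subalgebra of $M_n(P)$. Since $W(M_n(P)) = W(P)$ is almost divisible by (1), and almost divisibility is inherited by hereditary C*-subalgebras (for any positive contraction $a \in M_\infty(qM_n(P)q)$, almost divisibility in $M_n(P)$ provides a witness $x \in W(M_n(P))$, and because $kx \leq \langle a\rangle$ and $qM_n(P)q$ is hereditary in $M_n(P)$, the class $x$ is realised by a positive element of $M_\infty(qM_n(P)q)$), we conclude that $W(C^*\langle A, e_P\rangle)$ is almost divisible.

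The main technical delicacy lies in the semiprojective lifting step, specifically in producing a \emph{unital} lift into a quotient $l^\infty(\N, P)/I_N$; this is a standard refinement of the definition of semiprojectivity for unital domains, but must be handled with care. A secondary point is to confirm that stable rank one transfers from $A$ to $P$ under a finite-index Rokhlin conditional expectation, a prerequisite for applying \cite[Proposition~2.4]{RW} to $P$ in the reverse direction; this is known from earlier work on Rokhlin inclusions.
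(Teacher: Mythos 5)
Your argument for part (1) is essentially the paper's own proof: pass from almost divisibility of $W(A)$ to unital $*$-homomorphisms $Z_{k,k+1}\to A$ via the R{\o}rdam--Winter characterisation, push these into $P^\infty$ with the embedding $\beta$ of Proposition~\ref{prp:embedding}, use (weak) semiprojectivity of $Z_{k,k+1}$ to land in $P$ itself, and read the characterisation backwards. (You are in fact slightly more careful than the paper in flagging that stable rank one of $P$ is needed to invoke the characterisation in the reverse direction.)

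Part (2), however, is where you diverge from the paper, and your route has a genuine gap. You identify $C^*\langle A,e_P\rangle$ with a corner $qM_n(P)q$ and claim that almost divisibility passes to hereditary subalgebras because the witness $x\in W(M_n(P))$ with $k\cdot x\le\langle a\rangle\le(k+1)\cdot x$ ``is realised by a positive element of $M_\infty(qM_n(P)q)$'' since $kx\le\langle a\rangle$. This does not follow. From $x=\langle b\rangle\le\langle a\rangle$, i.e.\ $b\precsim a$, R{\o}rdam's lemma only gives that each cut-down $(b-\ep)_+$ is Cuntz equivalent to an element of the hereditary subalgebra generated by $a$; the class $\langle b\rangle$ itself need not be represented there. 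Replacing $x$ by $y_\ep=\langle(b-\ep)_+\rangle$ preserves the lower bound $k\cdot y_\ep\le\langle a\rangle$ but can destroy the upper bound $\langle a\rangle\le(k+1)\cdot y_\ep$, since $y_\ep$ may be strictly smaller than $x$ and nothing forces $\langle a\rangle$ to be dominated by $(k+1)\cdot y_\ep$ for small $\ep$. So ``almost divisibility is inherited by hereditary subalgebras'' is exactly the kind of statement one cannot take for granted at the level of $W(\cdot)$ (as opposed to the completed Cuntz semigroup with Brown's theorem for full hereditary subalgebras). The paper sidesteps this entirely: since $A$ and $C^*\langle A,e_P\rangle$ share the same unit, it composes the unital $*$-homomorphism $h\colon Z_{k,k+1}\to A$ with the inclusion $\iota\colon A\hookrightarrow C^*\langle A,e_P\rangle$ to get a unital $*$-homomorphism $Z_{k,k+1}\to C^*\langle A,e_P\rangle$ directly, and then concludes almost divisibility of $W(C^*\langle A,e_P\rangle)$ from \cite[Lemma~4.2]{Rordam:Z-absorbing}. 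You should replace your hereditary-subalgebra step with this unital-embedding argument (or otherwise justify the transfer of the witness to the corner).
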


\begin{proof}
$(1)$ 
Let $a \in M_\infty(P)$ be a positive contraction and $0 \not= k \in \N$.
Since $A$ has almost divisible Cuntz semigroup, 
there is $x \in W(A)$ such that 
$$
k\cdot x \leq \langle a\rangle \leq (k + 1)\cdot x.
$$
From \cite[Proposition~5.1]{RW} this is equivalent to that 
there is a unital *-homomorphism from the C*-algebra $Z_{n,n+1}$ into $A$, 
where $Z_{k, k+1} = \{f \in C([0, 1], M_{k} \otimes M_{k + 1}) \mid f(0) \in M_{k} \otimes \C,
f(1) \in \C \otimes M_{k + 1}\}$.

Since the inclusion $P \subset A$ has the Rokhlin property, there is an injective 
*-homomorphism $\beta \colon A \rightarrow P^\infty$ such that $\beta(a) = a$ for all $a \in P$.
Hence there is a unital *-homomorphism $h$ from $Z_{k, k+1}$ into $P^\infty$.
Since $Z_{k, k+1}$ is weakly semiprojective by \cite{Jiang-Su:absorbing}, 
there is $m \in \N$ and unital *-homomorphism $\tilde{h}$ from $Z_{k, k+1}$ into $\Pi_{n=m}^\infty P$ 
(\cite{Loring:lifting}).
Therefore, there is unital *-homomorphism from $Z_{k, k+1}$ into $P$. 

Again from \cite[Proposition~5.1]{RW} there is $y \in W(P)$ such that 
$$
k\cdot y \leq \langle a\rangle \leq (k + 1)\cdot y.
$$

$(2)$ 
Since $W(A)$ is almost divisible, for any $k \in \N$ 
there is a unital *-homomorphism $h \colon Z_{k, k+1} \rightarrow A$. 
Hence, there is a unital *-homomorphism 
$\iota \circ h \colon Z \rightarrow C^*\langle A, e_P\rangle$. 
Then we conclude that $W(C^*\langle A, e_P\rangle)$ is almost divisible by 
\cite[Lemma~4.2]{Rordam:Z-absorbing}.
\end{proof}

\vskip 3mm

\begin{dfn}\label{D:pure}\cite{Winter:Nuclear dimension}
Let $A$ be a separable unital C*-algebra. 
We say $A$ is pure if $W(A)$ has 
the strict comparison property and is
almost divisible.
\end{dfn}

\vskip 3mm

We note that any separable simple unital Jiang-Su absorbing 
C*-algebra is pure. It is not known that the converse is true.

\vskip 3mm

\begin{thm}\label{thm:pure}
Let $A$ be a separable, unital, exact, pure C*-algebra of stable one, that is,  
$A$ has strict comparison and $W(A)$ is almost divisible Cuntz semigroup.
Let $E\colon A \rightarrow P$ be of index finite type. 
Suppose that $E$ has the Rokhlin property. 
Then we have 
\begin{enumerate}
\item
$P$ is pure.
\item
The basic construction $C^*\langle A, e_p\rangle$ is pure.
\end{enumerate}
\end{thm}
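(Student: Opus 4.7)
The plan is to observe that \emph{pureness} decomposes into two independent conditions by Definition~\ref{D:pure} (strict comparison plus almost divisibility of the Cuntz semigroup), and that each of these two conditions has already been established individually for $P$ and for $C^*\langle A, e_P\rangle$ in the two preceding propositions. Thus the theorem will follow by combining those results.

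More precisely, for part (1), I would argue as follows. Since $A$ is pure, $A$ has strict comparison of positive elements and $W(A)$ is almost divisible. Because $A$ is exact, so is $P$ (the Rokhlin embedding $\beta\colon A\to P^\infty$ is irrelevant here; exactness passes to subalgebras). Apply Proposition~\ref{prp:strict comparison} to the inclusion $P\subset A$ with the Rokhlin conditional expectation $E$: this yields that $P$ has the strict comparison property. Separately, since $A$ has stable rank one and $W(A)$ is almost divisible, Proposition~\ref{prp:almost divisible} applied to the same inclusion yields that $W(P)$ is almost divisible. Hence $P$ satisfies both defining conditions of Definition~\ref{D:pure}, and so $P$ is pure.

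For part (2), the same two propositions supply precisely what is needed for $C^*\langle A, e_P\rangle$: Proposition~\ref{prp:strict comparison}(2) gives strict comparison, and Proposition~\ref{prp:almost divisible}(2) gives almost divisibility of $W(C^*\langle A, e_P\rangle)$. Combined with Definition~\ref{D:pure}, this shows $C^*\langle A, e_P\rangle$ is pure. The only tacit point to check is that the hypotheses of those propositions are satisfied for the basic construction context --- namely that index-finiteness and the Rokhlin property of $E$ are all that is required, which is indeed how those propositions are formulated.

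Since the proof is a direct assembly of earlier results, there is no real obstacle; the only minor subtlety is ensuring that the hypotheses (exactness for strict comparison, stable rank one for almost divisibility) are carried through correctly. Exactness of $P$ follows from exactness of $A$, and stable rank one is only needed as a hypothesis on $A$ in Proposition~\ref{prp:almost divisible}, not on $P$ itself, so no additional verification is required. The proof can therefore be written in just a few lines, essentially citing Propositions~\ref{prp:strict comparison} and~\ref{prp:almost divisible} and invoking Definition~\ref{D:pure}.
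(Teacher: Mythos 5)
Your proposal is correct and follows exactly the paper's argument: the theorem is obtained by combining Proposition~\ref{prp:strict comparison} (strict comparison for $P$ and for $C^*\langle A, e_P\rangle$) with Proposition~\ref{prp:almost divisible} (almost divisibility) via Definition~\ref{D:pure}. Your additional remarks on tracking the exactness and stable-rank-one hypotheses are a sound elaboration of what the paper leaves implicit.
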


\begin{proof}
Those results comes from Propositions~\ref{prp:strict comparison} and 
\ref{prp:almost divisible}.
\end{proof}

\vskip 3mm

\begin{cor}\label{cor:pure}
Let $A$ be a separable, unital, exact, pure C*-algebra of stable rank one 
with locally finite  nuclear dimension.
Let $E\colon A \rightarrow P$ be of index finite type. 
Suppose that $E$ has the Rokhlin property. 
Then $P$ and $C^*\langle A, e_P\rangle$ are  unital, pure C*-algebras with locally finite  nuclear dimension.
\end{cor}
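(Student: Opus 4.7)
The plan is to decompose the statement into its two independent parts---pureness and locally finite nuclear dimension---and reduce each to results already established in the paper.

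For pureness of both $P$ and $C^*\langle A, e_P\rangle$, I will directly invoke Theorem~\ref{thm:pure}, whose hypotheses (separable, unital, exact, pure $A$ of stable rank one, together with a finite-index conditional expectation $E\colon A \to P$ having the Rokhlin property) match the standing hypotheses of the corollary. Unitality of $P$ is automatic from $E(1_A)=1_A \in P$, and unitality of $C^*\langle A, e_P\rangle$ is Remark~\ref{rmk:b-const}(1), since $\mathrm{Index}\,E < \infty$.

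For locally finite nuclear dimension of $P$: the hypothesis on $A$ means that $A$ is (trivially) a local $\mathcal{C}_{\mathrm{lnuc}}$ C*-algebra---take $B=A$ and $\rho=\mathrm{id}$ in Definition~\ref{D:LC}---so Theorem~\ref{Thm:locally finite decomposition index finite} immediately yields that $P$ is local $\mathcal{C}_{\mathrm{lnuc}}$. For $C^*\langle A, e_P\rangle$, I will use Remark~\ref{rmk:b-const}(4) to write $C^*\langle A, e_P\rangle \cong qM_n(P)q$ for some $n \in \N$ and projection $q \in M_n(P)$, and then argue that $qM_n(P)q$ inherits the local $\mathcal{C}_{\mathrm{lnuc}}$ property from $P$. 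Concretely, given a finite set $F \subset qM_n(P)q$ and $\varepsilon > 0$, I approximate the entries of the elements of $F \cup \{q\}$ by a unital *-homomorphism $\rho\colon B \to P$ with $B \in \mathcal{C}_{\mathrm{lnuc}}$, amplify to $M_n(\rho)\colon M_n(B) \to M_n(P)$ (so that $M_n(B) \in \mathcal{C}_{\mathrm{lnuc}}$ by finite saturation), perturb an approximating self-adjoint element in $M_n(B)$ via functional calculus to obtain a genuine projection $q' \in M_n(B)$ whose image is close to $q$, and then cut down to $q'M_n(B)q'$.

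The main obstacle is the final corner step, where the approximating projection $\rho(q')$ must be replaced by $q$ itself (up to unitary perturbation) so that the compressed *-homomorphism lands in $qM_n(P)q$. This is exactly the kind of semiprojectivity / small-perturbation argument already used in the proof of Theorem~\ref{Thm:finite decomposition index finite}(1), combined with the closure of $\mathcal{C}_{\mathrm{lnuc}}$ under corners (Proposition~\ref{prp:local property} and its $\mathrm{lnuc}$ analogue from \cite{OP:Rohlin}); I do not anticipate any genuinely new difficulty beyond transcribing that argument.
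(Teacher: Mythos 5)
Your proposal is correct and follows essentially the same route as the paper, whose entire proof is the single line ``This comes from Theorems~\ref{Thm:locally finite decomposition index finite} and \ref{thm:pure}.'' In fact you go slightly further than the paper: Theorem~\ref{Thm:locally finite decomposition index finite} only covers $P$, so your corner argument passing locally finite nuclear dimension from $P$ to $C^*\langle A, e_P\rangle \cong qM_n(P)q$ (via finite saturation and the standard projection-perturbation step, which is already available in \cite{OP:Rohlin}) fills in a detail the paper leaves implicit.
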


\begin{proof}
This comes from Theorems~\ref{Thm:locally finite decomposition index finite} and  \ref{thm:pure}.
\end{proof}

\vskip 3mm

\begin{cor}
Let $A$ be a separable, simple, unital, exact, pure C*-algebra of stable one and 
$\alpha$ be an action of a finite group $G$ on $A$. 
Suppose that $\alpha$ has the Rokhlin property.
Then 
$A^G$ and $A \rtimes_\alpha G$ are  pure.
\end{cor}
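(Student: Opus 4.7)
The plan is to deduce this corollary from Theorem~\ref{thm:pure} applied to the canonical conditional expectation associated with the finite group action. Set $P = A^{G} = A^{\alpha}$ and let $E \colon A \to P$ be the canonical conditional expectation $E(x) = \frac{1}{\#G}\sum_{g \in G}\alpha_g(x)$. It is standard (cf.\ \cite{Watatani:index}) that $E$ has finite Watatani index, with $\Index E = \#G$; a quasi-basis can be built from a quasi-basis for $A$ over $A^{G}$ using the canonical implementing unitaries of the crossed product. Thus the first step is simply to record that $E$ is of index finite type.

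Next, I would invoke Proposition~\ref{prp:group}: the hypothesis that $\alpha$ has the Rokhlin property (in the sense of Izumi) is equivalent to the existence of a projection $e \in A_{\infty}$ with $E^{\infty}(e) = (\#G)^{-1} \cdot 1$. Since $A$ is simple, the map $A \ni x \mapsto xe$ is automatically injective (a nonzero element cannot be annihilated by the Rokhlin projection, as can be seen by applying $E^{\infty}$ to $x^*xe$ and using faithfulness of $E^{\infty}$). Therefore $E$ itself has the Rokhlin property in the sense of Definition~\ref{Rokhlin}. Now Theorem~\ref{thm:pure} applies directly: since $A$ is separable, unital, exact, pure, and of stable rank one, both $P = A^{G}$ and the basic construction $C^{*}\langle A, e_{P}\rangle$ are pure.

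The only remaining point is to transfer the pureness of $C^{*}\langle A, e_{P}\rangle$ to $A \rtimes_{\alpha} G$. For this I would invoke the classical identification $A \rtimes_{\alpha} G \cong C^{*}\langle A, e_{A^{G}}\rangle$ valid for outer finite group actions, as established in \cite[Proposition~2.8.6]{Watatani:index}. The outerness of $\alpha$ follows from the Rokhlin property together with simplicity of $A$: if some $\alpha_g$ (with $g \ne \mathrm{id}$) were inner, implemented by a unitary $u \in A$, then the presence of a Rokhlin projection $e \in A_{\infty}$ with $(\alpha_h)_{\infty}(e_k)=e_{hk}$ would be incompatible with the commutation relations forced by $u$. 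Under this identification the pureness of $C^{*}\langle A, e_{P}\rangle$ is exactly the pureness of $A \rtimes_{\alpha} G$.

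The main obstacle I anticipate is not pureness itself (which is handed to us by Theorem~\ref{thm:pure}) but the auxiliary bookkeeping: verifying that the Rokhlin property of the action transports cleanly to the Rokhlin property of the expectation (including the injectivity clause of Definition~\ref{Rokhlin}), and justifying the isomorphism $A \rtimes_{\alpha} G \cong C^{*}\langle A, e_{A^{G}}\rangle$ by citing outerness. Both steps are essentially well-documented in the literature, so the proof should reduce to at most a few lines once the preceding theorems of the paper are in place.
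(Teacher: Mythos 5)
Your proposal is correct and follows essentially the same route as the paper: pass to the canonical conditional expectation $E \colon A \to A^{\alpha}$, use Proposition~\ref{prp:group} together with simplicity of $A$ to upgrade the Rokhlin property of $\alpha$ to the Rokhlin property of $E$ in the sense of Definition~\ref{Rokhlin}, and then apply Theorem~\ref{thm:pure}. You are in fact somewhat more explicit than the paper about the final identification $A \rtimes_{\alpha} G \cong C^{*}\langle A, e_{A^{G}}\rangle$ via outerness, which the paper leaves implicit.
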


\begin{proof}
Since $\alpha$ has the Rokhlin property, 
the canonical conditional expectation 
$E \colon A \rightarrow A^\alpha$ has the 
Rokhlin projection $e$ by Proposition~\ref{prp:group}. 
Since $A$ is simple, a map $A \ni x \mapsto xe$ is injective.
This means that $E$ has the Rokhlin property. 
Hence the  conclusion follows from Theorem~\ref{thm:pure}.
\end{proof}

\vskip 3mm

\begin{rmk}
From \cite[Corollary~6.2]{Winter:Nuclear dimension} 
if $A$ is a separable, simple, 
unital, pure C*-algebra with locally finite  nuclear dimension 
as in Corollary~\ref{cor:pure}, then $A$ is $\mathcal{Z}$-absorbing. 
Hence it seems that $\mathcal{Z}$-absobing property is stable under the condition that 
a pair of unital C*-algebras of index finite type and has the Rokhlin property. 
Indeed, under this assumption if $A$ is $\mathcal{D}$-absorbing (i.e. $A \otimes \mathcal{D} \cong A)$ 
for a strongly self-absorbing C*-algebra $\mathcal{D}$, then $P$ is also $\mathcal{D}$-absorbing 
(\cite{OT}).
\end{rmk}

%%%%%%%%%%%%%%%%%%%%%%%%%%%%%%%%%%%%%%%%%%%%%%%%%%

\section{Remark: Rokhlin property for inclusions}

Let $P \subset A$ be an inclusion of unital C*-algebras and 
$E\colon A \rightarrow P$ be of index finite type. 
As shown in \cite{OT} and previous sections we know that several local properties 
(stable rank one, real rank zero, AF, AI, AT properties, the order of projections 
over $A$ determined by traces, and $\mathcal{D}$-absorbing etc.) of 
$A$ are inherited to $P$ when $E$ has the Rokhlin property. 
The converse, however,  is not true. Indeed, there is an example 
of an inclusion of C*-algebras $A^{\Z/2\Z} \subset A$ such that a conditional expectation 
$E\colon A \rightarrow A^{\Z/2Z}$ is of index finite type, has the Rohklin property, and 
$A^{\Z/2Z}$ is the CAR algebra, but $A$ is not an AF C*-algebra. 

Let $\alpha$ be a symmetry constructed by Blackadar in \cite{B:symmetry} such that 
$CAR^{\Z/2\Z}$ is not an AF C*-algebra. Then $\alpha$ does not have  the Rokhlin property. 
Indeed, this has  really the tracial Rokhlin property. (See the definition in \cite{Phillips:tracial}.)
But, its dual action $\hat{\alpha}$ has the Rokhlin property 
by \cite[Proposition~3.5]{Phillips:cyclic}.
Set $P = CAR$ and $A = CAR \rtimes_\alpha \Z/2\Z$. 
Then $A$ is not an AF C*-algebra, because that $A$ and $CAR^{\Z/2\Z}$ are stable isomorphic.
Since $P = A^{\hat{\alpha}}$ and $\hat{\alpha}$ has the Rokhlin property, 
the canonical conditional expectation $E\colon A \rightarrow P$ is of index finite type and has 
the Rokhlin property by \cite{KOT}. 

Let $\mathcal{C}$ be the set of all finite dimensional C*-algebra. Then since $P$ is an AF C*-algebra, 
we know that $P$ is a local $\mathcal{C}$ algebra. But obviously, $A$ is not a local $\mathcal{C}$ 
algebra.

%%%%%%%%%%%%%%%%%%%%%%%%%%%%%%%%%%%%

\end{document}